\setlist[description]{font=\normalfont\scshape}
\xpatchcmd{\proof}{\itshape}{\normalfont\bfseries}{}{}
\newtheoremstyle{repeat}{}{}{\itshape}{}{\bfseries}{.}{.5em}{#3, repeated}
\newtheorem{theorem}{Theorem}[section]
\newtheorem{proposition}[theorem]{Proposition}
\newtheorem{lemma}[theorem]{Lemma}
\newtheorem{corollary}[theorem]{Corollary}
\newtheorem{fact}[theorem]{Fact}
\theoremstyle{definition}
\newtheorem{definition}[theorem]{Definition}
\newtheorem{remark}[theorem]{Remark}
\newtheorem{convention}[theorem]{Convention}
\newtheorem{example}[theorem]{Example}
\newtheorem{question}[theorem]{Question}
\theoremstyle{repeat}
\newtheorem*{repeated-theorem}{Repeat}
\newcommand{\A}{\mathcal{A}}
\newcommand{\C}{\mathcal{C}}
\newcommand{\K}{\mathcal{K}}
\newcommand{\M}{\mathcal{M}}
\newcommand{\MM}{\mathfrak{M}}
\newcommand{\N}{\mathbb{N}}
\newcommand{\Set}{\mathbf{Set}}
\newcommand{\Mod}{\mathbf{Mod}}
\newcommand{\SubMod}{\mathbf{SubMod}}
\newcommand{\MetMod}{\mathbf{MetMod}}
\newcommand{\SubMetMod}{\mathbf{SubMetMod}}
\newcommand{\SubSet}{\mathbf{SubSet}}
\DeclareMathOperator{\dom}{dom}
\DeclareMathOperator{\tp}{tp}
\DeclareMathOperator{\gtp}{gtp}
\DeclareMathOperator{\colim}{colim}
\DeclareMathOperator{\Sub}{Sub}
\DeclareMathOperator{\Sgtp}{S_{gtp}}
\DeclareMathOperator{\density}{density}
\DeclareMathOperator{\Hom}{Hom}
\newcommand{\op}{{\textup{op}}}
\renewcommand{\phi}{\varphi}
\newcommand{\forkindep}[1][]{%
  \mathrel{
    \mathop{
      \vcenter{
        \hbox{\oalign{\noalign{\kern-.3ex}\hfil$\vert$\hfil\cr
              \noalign{\kern-.7ex}
              $\smile$\cr\noalign{\kern-.3ex}}}
      }
    }\displaylimits_{#1}
  }
}
\newcommand{\nonforkindep}[1][]{%
  \mathrel{
    \mathop{
      \vcenter{
        \hbox{\oalign{\noalign{\kern-.3ex}\hfil$\vert$\rlap{$'$}\hfil\cr
              \noalign{\kern-.7ex}
              $\smile$\cr\noalign{\kern-.3ex}}}
      }
    }\displaylimits_{#1}
  }
}
\newcommand{\indep}[2][]{%
  \mathrel{
    \mathop{
      \vcenter{
        \hbox{\oalign{\noalign{\kern-.3ex}\hfil$\vert$\hfil\cr
              \noalign{\kern-.7ex}
              $\smile$\cr\noalign{\kern-.3ex}}}
      }
    }\displaylimits_{#1}^{#2}
  }
}
\newcommand{\indepprime}[2][]{%
  \mathrel{
    \mathop{
      \vcenter{
        \hbox{\oalign{\noalign{\kern-.3ex}\hfil$\vert$\rlap{$'$}\hfil\cr
              \noalign{\kern-.7ex}
              $\smile$\cr\noalign{\kern-.3ex}}}
      }
    }\displaylimits_{#1}^{#2}
  }
}
\title{The Kim-Pillay theorem for Abstract Elementary Categories}
\author{Mark Kamsma}
\email[Mark Kamsma]{m.kamsma@uea.ac.uk}
\urladdr{https://markkamsma.nl}
\date{\today \\ \indent \emph{2020 Mathematics Subjects Classification}: Primary: 03C45; secondary: 03C48, 03C52, 03C66, 18C35}
\keywords{dividing; accessible category; simple theory; abstract elementary class; independence relation; abstract elementary category}
\begin{document}

\begin{abstract}
We introduce the framework of AECats (abstract elementary categories), generalising both the category of models of some first-order theory and the category of subsets of models. Any AEC and any compact abstract theory (``cat'', as introduced by Ben-Yaacov) forms an AECat. In particular, we find applications in positive logic and continuous logic: the category of (subsets of) models of a positive or continuous theory is an AECat.

The Kim-Pillay theorem for first-order logic characterises simple theories by the properties dividing independence has. We prove a version of the Kim-Pillay theorem for AECats with the amalgamation property, generalising the first-order version and existing versions for positive logic.
\end{abstract}

\maketitle

\setcounter{tocdepth}{1}
\tableofcontents

\section{Introduction}
\label{sec:introduction}
For any complete first-order theory, Shelah's notion of dividing gives a ternary relation on subsets of models. Stable theories can be characterised by the properties this relation has. Lieberman, Rosick\'y and Vasey proved a category-theoretic version of this characterisation in \cite{lieberman_forking_2019}. Similarly, we can characterise simple theories using the \emph{Kim-Pillay theorem}, see \cite{kim_simple_1997}. The main result of this paper is a category-theoretic version of this theorem (\thref{thm:kim-pillay-category-theoretic}).

For a first-order theory $T$, the category of models of $T$ with elementary embeddings forms an accessible category, but accessible categories are more general. For example, there is Shelah's notion of AEC (abstract elementary class, see e.g. \cite{shelah_classification_2009}), which is a class of structures with a choice of embedding, satisfying a few axioms. Every AEC can naturally be seen as an accessible category. Other examples can be found by considering the category of models of some theory in another form of logic, such as positive logic and continuous logic (see e.g. \cite{poizat_positive_2018, ben-yaacov_positive_2003, ben-yaacov_model_2008}). There is also the concept of compact abstract theories, or cats, from \cite{ben-yaacov_positive_2003}, which in practice turn out to be accessible categories. Even then, accessible categories are more general, they are generally the category of models of some infinitary theory with homomorphisms as arrows, see \cite[Theorem 5.35]{adamek_locally_1994}. We define a specific kind of accessible category, an AECat, which still covers all the previously mentioned cases.

Even though the framework of AECats is very close to that of AECs, it is still more general. Some settings are hard to handle as AECs, but naturally fit the category-theoretic framework of AECats. For example, the class of metric models of a continuous theory in the sense of \cite{ben-yaacov_model_2008} is not an AEC, but it does form an AECat (see \thref{ex:continuous-logic}). Of course, there are \emph{metric AECs} as introduced in \cite{hirvonen_categoricity_2009}, but AECats provide a unifying approach.

Simplicity has already been studied separately for some of the aforementioned settings. For example, in AECs \cite{hyttinen_independence_2006} and in positive logic \cite{pillay_forking_2000}, or more generally, in cats \cite{ben-yaacov_simplicity_2003} and in homogeneous model theory \cite{buechler_simple_2003}. A few days after the first preprint of this paper became available online, another preprint appeared \cite{grossberg_simple-like_2020}, studying different aspects of simple-like independence relations in AECs. 

In \cite{lieberman_forking_2019}, the concept of an abstract independence relation on a category is introduced. They prove that there can be at most one stable such independence relation (similar to \thref{cor:stable-independence-relation-unique}). They define an independence relation as a collection of commutative squares. This has the benefit that it allows for a more category-theoretic study of the independence relation. For example, assuming transitivity of the independence relation, these squares form a category. In our approach we will define an independence relation as a relation on triples of subobjects (section \ref{sec:independence-relations}). We lose the nice way of viewing the independence relation as a category, but the benefit is that the calculus we get is more intuitive and easier to work with. Under some mild assumptions both approaches are essentially the same, in the sense that we can recover one from the other.

\vspace{\baselineskip}\noindent
\textbf{Main results.} We introduce the concept of an AECat (\thref{def:aecat}), generalising both the category of models of some first-order theory $T$ and the category of subsets of models of $T$. The framework of AECats can also be applied to positive logic (\thref{ex:category-of-models-positive-theory}), continuous logic (\thref{ex:continuous-logic}), quasiminimal excellent classes (\thref{ex:quasiminimal-excellent-classes}), AECs (\thref{ex:aec}) and compact abstract theories (\thref{ex:cat}).

We introduce the notion of isi-dividing (\thref{def:isi-dividing}), which is closely related to the usual notion of dividing (\thref{rem:isi-dividing-vs-dividing} and \thref{rem:isi-dividing-partial-answers}). We use this to prove a version of the Kim-Pillay theorem for AECats.
\begin{theorem}[Kim-Pillay theorem for AECats]
\thlabel{thm:kim-pillay-category-theoretic}
Let $(\C, \M)$ be an AECat with the amalgamation property, and suppose that $\forkindep$ is a simple independence relation. Let $A, B, C$ be subobjects of a model $M$. Then $A \indep[C]{M} B$ if and only if $\gtp(A, B, C; M)$ does not isi-divide over $C$.
\end{theorem}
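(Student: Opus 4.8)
The plan is to prove this as a characterization theorem in the classical Kim–Pillay style, where one shows that a given "well-behaved" independence relation must coincide with the canonical dividing-type relation. So the two directions have quite different flavors, and I would organize the proof around them.

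The plan is to prove the equivalence by establishing the two implications separately, treating the statement as a \emph{canonicity} result: the abstract relation $\forkindep$ is forced to coincide with the concrete non-isi-dividing relation. Throughout I would use the amalgamation property of $(\C, \M)$ to realise Galois types and to compare them via isomorphisms fixing the base $C$, in place of the compactness and monster-model arguments available in the first-order setting.

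For the forward implication, suppose $A \indep[C]{M} B$; I want to show $\gtp(A, B, C; M)$ does not isi-divide over $C$. First I would use the extension and local character axioms of a simple independence relation to build a $\forkindep$-Morley sequence over $C$, that is, a $C$-indiscernible sequence $(B_i)$ of copies of $B$ which is $\forkindep$-independent over $C$. Using extension again I would find a copy $A'$ of $A$ that is independent over $C$ from the whole sequence, with $\gtp(A', B_i, C; M) = \gtp(A, B, C; M)$ for every $i$. This exhibits $A'$ as a simultaneous realisation of the Galois type along the entire indiscernible sequence, which directly contradicts the $k$-inconsistency required by isi-dividing. Hence no isi-dividing configuration can exist, as desired.

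For the converse, assume $\gtp(A, B, C; M)$ does not isi-divide over $C$; I must produce $A \indep[C]{M} B$. Again I would construct a $\forkindep$-Morley sequence $(B_i)$ over $C$ of copies of $B$ via local character and extension. Because the type does not isi-divide, the family of Galois types along the sequence is consistent, so amalgamation yields a realisation $A^*$ with $\gtp(A^*, B_i, C; M) = \gtp(A, B, C; M)$ for all $i$. The heart of the argument is then to upgrade this to genuine independence: applying local character to $A^*$ against the sequence gives an index past which $A^*$ is independent from a tail, and the independence theorem (3-amalgamation over $C$) --- the defining feature of a simple independence relation --- lets me glue the independent copies so as to conclude $A^* \indep[C]{M} B_i$ for a suitable $i$. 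Since $B_i$ and $A^* B_i$ are isomorphic over $C$ to $B$ and $AB$, invariance transfers this to $A \indep[C]{M} B$.

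I expect the converse to be the main obstacle, and within it the combination of the independence theorem with the Morley-sequence construction. Two points need particular care in the categorical setting: first, Morley sequences must be built without compactness, relying instead on the amalgamation property together with finite character and an Ehrenfeucht--Mostowski / Ramsey-type extraction to secure indiscernibility; second, because the base $C$ is only a subobject and the realisations $B_i, A^*$ may a priori live in an extension of $M$ rather than in $M$ itself, I would carefully track the ambient model and verify that each use of extension, local character and 3-amalgamation is legitimate at the level of Galois types of \emph{subobjects} rather than of elements.
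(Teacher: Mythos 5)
There is a genuine gap, and it sits in the direction you treat as the easy one. To show that $A \indep[C]{M} B$ implies $\gtp(A,B,C;M)$ does not isi-divide over $C$, it is not enough to construct one well-chosen independent sequence along which the type is consistent: by \thref{def:isi-dividing}, isi-dividing asserts the \emph{existence} of a witnessing isi-sequence of every sufficiently large length, so its negation requires you to show that \emph{every} isi-sequence $(b_i)_{i < \lambda}$ over $c$ with $\gtp(b_i, c; N) = \gtp(b, c; M)$ admits a large subfamily along which the type is consistent. Your argument only exhibits one good sequence and therefore does not contradict isi-dividing at all. Moreover, even for your chosen Morley sequence, the step ``using extension again I would find a copy $A'$ independent over $C$ from the whole sequence with $\gtp(A', B_i, C) = \gtp(A, B, C)$ for every $i$'' is unjustified: \textsc{Existence} (in the strengthened form of \thref{prop:basic-properties-independence-relation}(iii)) preserves the Galois type over $C B_0$ while enlarging the set from which $A'$ is independent, but it gives no control over $\gtp(A', B_i, C)$ for $i > 0$. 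Securing a simultaneous realisation along a sequence is precisely the content of the independence theorem, and in the paper this direction is the long one: starting from an \emph{arbitrary} isi-sequence, one uses \textsc{Local Character} and the isi-property to get $B_i \indep[M_{i_0}]{M} M_i$ along a tail, and then runs a transfinite induction building a chain of models $(N_i)$ in which \textsc{3-amalgamation} is applied at every successor step to amalgamate the realisation $a'$ with the next $b_i$, with \textsc{Union} handling limits and \thref{lem:chain-consistency} replacing compactness at the end. None of this machinery appears in your sketch, and you have placed \textsc{3-amalgamation} in the wrong half of the proof.

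Your converse direction is much closer to the paper's argument (build a $\forkindep[C]$-independent isi-sequence via \thref{lem:build-independent-isi-sequence}, extract from non-isi-dividing a consistent subfamily of size at least $\Upsilon(A)$, apply \textsc{Local Character} to the realisation, and transfer down to base $C$), except that the final gluing step uses \textsc{Symmetry} and \textsc{Transitivity} together with the independence of the sequence, not \textsc{3-amalgamation}. Note also that your appeal to a Ramsey-type extraction of indiscernibles is exactly what isi-sequences are designed to avoid: in a general AECat such extractions are unavailable without large cardinal hypotheses (\thref{rem:isi-dividing-partial-answers}), and the sequences needed here are constructed directly from \textsc{Existence}, \textsc{Transitivity} and \textsc{Union}.
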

The theorem implies canonicity of simple and stable independence relations.
\begin{corollary}[Canonicity of simple independence relations]
\thlabel{cor:unique-independence-relation}
On an AECat with the amalgamation property there can only be one simple independence relation.
\end{corollary}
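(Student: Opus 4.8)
The plan is to derive canonicity as a direct consequence of the main theorem (\thref{thm:kim-pillay-category-theoretic}), which characterises any simple independence relation in terms of the single, relation-independent notion of isi-dividing. The key observation is that the right-hand side of the biconditional in that theorem — that $\gtp(A,B,C;M)$ does not isi-divide over $C$ — makes no reference whatsoever to the independence relation $\forkindep$; it is a property determined entirely by the AECat $(\C,\M)$ itself. So the strategy is to suppose we are given two simple independence relations on the same AECat and show they must agree on every triple of subobjects.

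Concretely, I would argue as follows. Let $\indep[]{}^{1}$ and $\indep[]{}^{2}$ both be simple independence relations on $(\C,\M)$, where the AECat has the amalgamation property. Fix an arbitrary model $M$ and subobjects $A, B, C$. Applying \thref{thm:kim-pillay-category-theoretic} to the first relation gives that $A \indep[C]{M}^{1} B$ holds if and only if $\gtp(A,B,C;M)$ does not isi-divide over $C$. Applying the same theorem to the second relation gives that $A \indep[C]{M}^{2} B$ holds if and only if the very same condition on $\gtp(A,B,C;M)$ fails to hold. Since both relations are characterised by identical conditions, we conclude $A \indep[C]{M}^{1} B$ if and only if $A \indep[C]{M}^{2} B$. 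As $M$ and the subobjects were arbitrary, the two relations coincide, which is exactly the assertion that there can be at most one simple independence relation.

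The only genuine content here is already carried by \thref{thm:kim-pillay-category-theoretic}; the corollary is a formal deduction once the theorem is in hand. The main (indeed only) obstacle is therefore purely bookkeeping: I must be sure that the hypothesis ``simple independence relation'' invoked in the corollary matches precisely the hypothesis required by the theorem, and that both relations are being compared over the same class of models and subobjects with a common amalgamation assumption. Provided the definition of a simple independence relation (\thref{def:isi-dividing} and the surrounding development) and isi-dividing are relation-agnostic in the way the theorem's statement suggests, no further hypotheses are needed and the two-line argument above suffices.
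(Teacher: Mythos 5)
Your proposal is correct and is exactly the paper's own argument: the paper states that \thref{cor:unique-independence-relation} follows immediately from \thref{thm:kim-pillay-category-theoretic}, precisely because isi-dividing is defined purely from the AECat and makes no reference to the independence relation, so any two simple independence relations are characterised by the same condition. (One small bookkeeping slip: the definition of a simple independence relation is \thref{def:simple-and-stable-independence-relation}, not \thref{def:isi-dividing}, but this does not affect the argument.)
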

\begin{corollary}[Canonicity of stable independence relations]
\thlabel{cor:stable-independence-relation-unique}
On an AECat with the amalgamation property there can only be one stable independence relation. More precisely, if $\forkindep$ is a stable independence relation and $\nonforkindep$ is a simple independence relation then $\forkindep = \nonforkindep$.
\end{corollary}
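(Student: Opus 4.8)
The plan is to reduce the statement to the Kim-Pillay theorem \thref{thm:kim-pillay-category-theoretic}, which already pins down any \emph{simple} independence relation as ``non-isi-dividing''. The only genuinely new ingredient is that stability is a strengthening of simplicity: a stable independence relation is in particular a simple one. Once that is in hand, everything collapses to two applications of the main theorem.

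First I would verify that every stable independence relation $\forkindep$ satisfies all the axioms defining a simple independence relation. Concretely, this means checking that the base axioms (existence, invariance, monotonicity, transitivity/base monotonicity, symmetry, and the relevant character and finite-character conditions) are common to both notions, and that the extra requirements distinguishing stability --- such as stationarity / uniqueness of independent extensions --- strengthen rather than conflict with the simplicity axioms. This is the step I expect to require the most care, since it is the only place where the internal content of ``stable'' as opposed to ``simple'' is used; all of it, however, is a routine unwinding of the two definitions side by side.

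With the inclusion ``stable $\Rightarrow$ simple'' established, the ``more precisely'' clause is immediate. Given a stable $\forkindep$ and a simple $\nonforkindep$ on an AECat $(\C,\M)$ with amalgamation, both relations are simple, so \thref{thm:kim-pillay-category-theoretic} applies to each. For any subobjects $A,B,C$ of a model $M$ it yields
\[
A \forkindep[C] B \iff \gtp(A,B,C;M) \text{ does not isi-divide over } C \iff A \nonforkindep[C] B,
\]
whence $\forkindep = \nonforkindep$. Finally, the headline assertion that an AECat with amalgamation carries at most one stable independence relation follows by taking $\nonforkindep$ to be a second stable relation: being stable it is in particular simple, so the displayed equivalences again force the two to coincide (this also recovers it as a special case of \thref{cor:unique-independence-relation}).
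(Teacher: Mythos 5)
Your overall route is exactly the paper's: show that a stable independence relation is in particular simple, then apply \thref{thm:kim-pillay-category-theoretic} to both relations to identify each with non-isi-dividing. The second half of your argument (the two applications of the main theorem and the specialisation to two stable relations) is correct and is precisely how the paper concludes.

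The gap is in the step you describe as ``a routine unwinding of the two definitions side by side.'' It is not: by \thref{def:simple-and-stable-independence-relation}, a stable independence relation adds \textsc{Stationarity} to the common list of axioms, while a simple one adds \textsc{3-amalgamation}, and neither axiom list formally contains the other. So ``stable $\Rightarrow$ simple'' is not a definitional inclusion; it requires proving that \textsc{Stationarity}, together with \textsc{Invariance}, \textsc{Transitivity}, \textsc{Existence} and \textsc{Symmetry}, implies \textsc{3-amalgamation}. This is \thref{prop:uniqueness-implies-3-amalgamation} in the paper, and its proof has real content: one amalgamates the three partial configurations into two cospans $U_1$ and $U_2$, uses the extension property (\thref{prop:basic-properties-independence-relation}(iii)) to place an independent copy of $N_2$ over $M$ inside extensions of each, and only then invokes \textsc{Stationarity} to identify the two resulting types of $A$ over $N_2 M$ and glue the diagrams into a single $N$ with $A \indep[M]{N} N_2$. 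Your proposal as written would not produce this argument, because it presupposes that the simplicity axioms are a subset of the stability axioms. Once \thref{cor:stable-independence-relation-is-simple} is supplied, the rest of your proof goes through verbatim.
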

In an AECat we have no syntax, so we consider Galois types instead of syntactic types (section \ref{sec:galois-types}). For first-order logic, positive logic and continuous logic Galois types coincide with syntactic types, in the sense that two tuples have the same Galois type if and only if they have the same syntactic type.

Note that \thref{cor:stable-independence-relation-unique} is similar to \cite[Corollary 9.4]{lieberman_forking_2019} (see also \thref{rem:lrv-independence}), which is a corollary to their more general canonicity result Theorem 9.1. They only study independence relations that satisfy \textsc{Stationarity} (``uniqueness'' in their paper), which is of course not satisfied by general simple independence relations. On the other hand, we rely on what we call \textsc{Union} (which is similar to their ``$(< \aleph_0)$-witness property'').

\vspace{\baselineskip}\noindent
\textbf{Overview.} We start by setting up the framework of AECats in section \ref{sec:aecats}. The idea is that any category of models of some theory will fit this framework. In some applications we would like to have access to the subsets of models, so the framework is made flexible enough to also fit something like the category of subsets of models. We provide the motivating examples for AECats, arising from: first-order logic, positive logic, continuous logic and AECs.

AECats do not have syntax, but we can still make sense of a notion of types through the idea of Galois types, as we do in section \ref{sec:galois-types}. Since we do not have access to single elements in our category, we instead consider tuples of arrows, keeping in mind that each arrow can actually represent an entire tuple of elements. From this perspective, there is no difference between the domain of a type and its parameters.

An interesting property for Galois types is being finitely short, which says that the Galois type of a tuple is determined by the Galois types of its finite subtuples (\thref{def:finitely-short}). We do not need this property in the rest of this paper, but we mention it and the links it provides to existing frameworks in section \ref{sec:finitely-short-aecats}.

In section \ref{sec:isi-sequences-and-isi-dividing} we introduce the notion of isi-dividing for Galois types. We also discuss its connections to the usual notion of dividing.

In section \ref{sec:independence-relations} we introduce the notion of an independence relation as a relation on triples of subobjects. We formulate the properties it can have, and prove some basic facts about these properties, including how to derive \textsc{3-amalgamation} from a few other properties. This allows us to later compare simple and stable independence relations.

Finally, section \ref{sec:kim-pillay} contains the proof of the main theorem.

\vspace{\baselineskip}\noindent
\textbf{Acknowledgements.} I would like to thank my supervisor, Jonathan Kirby, for his invaluable input and feedback. I would also like to thank Marcos Mazari-Armida and Sebastien Vasey for their extensive feedback, and the anonymous referees whose remarks improved the presentation of this paper and provided a stronger connection with existing frameworks. This paper is part of a PhD project at the UEA (University of East Anglia), and as such is supported by a scholarship from the UEA.
\section{AECats}
\label{sec:aecats}
\begin{convention}
\thlabel{conv:regular-cardinals}
Throughout, $\kappa$, $\lambda$ and $\mu$ will denote regular infinite cardinals.
\end{convention}
Our framework is based on the category of models of some theory $T$, and the category of subsets of models of $T$.
\begin{definition}
\thlabel{def:category-of-models}
Given a first-order theory $T$, we denote by $\Mod(T)$ its category of models with elementary embeddings. We denote by $\SubMod(T)$ the category of subsets of models of $T$. That is, its objects are pairs $(A, M)$ where $A \subseteq M$ and $M$ is a model of $T$. An arrow $f: (A, M) \to (B, N)$ is then an elementary map $f: A \to B$. That is: for all $\bar{a} \in A$ and every formula $\phi(\bar{x})$ we have $M \models \phi(\bar{a})$ if and only if $N \models \phi(f(\bar{a}))$.
\end{definition}
There is a full and faithful embedding $\Mod(T) \hookrightarrow \SubMod(T)$, by sending $M$ to $(M, M)$. So we consider $\Mod(T)$ as a full subcategory of $\SubMod(T)$.

Due to the downward L\"owenheim-Skolem theorem, every model can be written as a union of models of cardinality at most $|T|$. This motivates the definition of an accessible category (see \cite{adamek_locally_1994} for an extensive treatment).
\begin{definition}
\thlabel{def:accessible-category}
A category $\C$ is called \emph{$\lambda$-accessible} if:
\begin{enumerate}[label=(\roman*)]
\item $\C$ has $\lambda$-directed colimits,
\item there is a set $\A$ of $\lambda$-presentable objects, such that every object in $\C$ can be written as a $\lambda$-directed colimit of objects in $\A$.
\end{enumerate}
A category is called \emph{accessible} if it is $\lambda$-accessible for some $\lambda$.
\end{definition}
We recall that an object $X$ is $\lambda$-presentable when $\Hom(X, -)$ preserves all $\lambda$-directed colimits. This means that if $Y = \colim_{i \in I} Y_i$ for some $\lambda$-directed diagram $(Y_i)_{i \in I}$ then every arrow $X \to Y$ factors essentially uniquely as $X \to Y_i \to Y$ for some $i \in  I$.

This gives us a notion of size. For example, in $\Mod(T)$ we have for $\lambda > |T|$ that $M$ is $\lambda$-presentable precisely when $|M| < \lambda$. Similarly, in $\SubMod(T)$, for any $\lambda$, an object $(A, M)$ is $\lambda$-presentable precisely when $|A| < \lambda$.

It is well-known that $\Mod(T)$ has directed colimits. In $\SubMod(T)$ directed colimits also exist: they are calculated by taking unions (in a big enough model). Thus $\Mod(T)$ and $\SubMod(T)$ are examples of accessible categories. Besides the existence of directed colimits (instead of just $\lambda$-directed colimits), these categories enjoy some other useful properties. For example, all arrows are monomorphisms and they have the amalgamation property.
\begin{definition}
\thlabel{def:amalgamation-base}
We say that a category has the \emph{amalgamation property} (or \emph{AP}) if given any span $N_1 \xleftarrow{f_1} M \xrightarrow{f_2} N_2$, there is a cospan $N_1 \xrightarrow{g_1} U \xleftarrow{g_2} N_2$, called an \emph{amalgam}, such that the following square commutes:
\[
\begin{tikzcd}
                      & U                                      &                        \\
N_1 \arrow[ru, "g_1"] &                                        & N_2 \arrow[lu, "g_2"'] \\
                      & M \arrow[ru, "f_2"'] \arrow[lu, "f_1"] &                       
\end{tikzcd}
\]
\end{definition}
The point of considering $\SubMod(T)$ is that we can later apply our results to arbitrary subsets of models. However, we do need to keep track of which objects are models.
\begin{definition}
\thlabel{def:aecat}
An \emph{AECat}, short for \emph{abstract elementary category}, consists of a pair $(\C, \M)$ where $\C$ and $\M$ are accessible categories and $\M$ is a full subcategory of $\C$ such that:
\begin{enumerate}[label=(\arabic*)]
\item $\M$ has directed colimits, which the inclusion functor into $\C$ preserves;
\item all arrows in $\C$ (and thus in $\M$) are monomorphisms.
\end{enumerate}
The objects in $\M$ are called \emph{models}. We say that $(\C, \M)$ has the \emph{amalgamation property} (or \emph{AP}) if $\M$ has the amalgamation property.
\end{definition}
The name ``abstract elementary category'' was used before in \cite[Definition 5.3]{beke_abstract_2012} for a very similar concept. As noted there as well, the name was used even before that in an unpublished note by Jonathan Kirby \cite{kirby_abstract_2008}.

Note that if $(\C, \M)$ is an AECat then $(\M, \M)$ is an AECat as well.
\begin{example}
\thlabel{ex:category-of-models-aecat}
As seen in the discussion before, both $(\Mod(T), \Mod(T))$ and $(\SubMod(T), \Mod(T))$ are AECats with AP. These are the prototypical examples of AECats to keep in mind.
\end{example}
To help with intuition that objects in $\C$ play the role of subsets of models, the reader may assume that for every object $A$ in $\C$, there is an arrow $A \to M$ with $M$ in $\M$. This is in fact true in all examples we consider and any object in $\C$ we will consider in this paper will always come with an arrow into some model anyway.
\begin{remark}
\thlabel{rem:chains-vs-directed-colimits}
Recall that a chain is a diagram of shape $\delta$, where $\delta$ is some ordinal. By \cite[Corollary 1.7]{adamek_locally_1994} we could equivalently replace ``directed colimits'' by ``colimits of chains'' in (1) in \thref{def:aecat}.
\end{remark}
\begin{remark}
\thlabel{rem:category-with-models-accessible-inclusion-functor}
If $(\C, \M)$ is an AECat then $\C$ and $\M$ may be accessible for different cardinals. By \cite[Corollary 2.14]{adamek_locally_1994} and \cite[Theorem 2.19]{adamek_locally_1994} there are arbitrarily large $\lambda$ such that both $\C$ and $\M$ are $\lambda$-accessible and the inclusion $\M \hookrightarrow \C$ preserves $\lambda$-presentable objects.
\end{remark}
Other applications of AECats include positive logic, continuous logic, quasiminimal excellent classes, AECs and compact abstract theories. We discuss those in the following examples.
\begin{example}
\thlabel{ex:category-of-models-positive-theory}
For an introduction to positive logic, we refer to \cite{poizat_positive_2018} or \cite{ben-yaacov_positive_2003}. The terminology in the latter differs significantly from the former, and we use the terminology of \cite{poizat_positive_2018} to recall the basics of positive logic. All claims in this example can be found there. The formulas of interest are the \emph{positive existential formulas}, these are of the form $\exists \bar{x} \phi(\bar{x}, \bar{y})$ where $\phi(\bar{x}, \bar{y})$ is \emph{positive quantifier-free} (i.e. built from atomic formulas using conjunction, disjunction, $\top$ and $\bot$). An \emph{h-inductive theory} $T$ is then a set of \emph{h-inductive sentences}. That is, sentences of the form $\forall \bar{x}(\phi(\bar{x}) \to \psi(\bar{x}))$ where $\phi(\bar{x})$ and $\psi(\bar{x})$ are positive existential. A \emph{homomorphism} of structures is a function that preserves truth of positive existential formulas, and it is called an \emph{immersion} if it also reflects truth of such formulas. So immersions are in particular injective. A model $M$ of $T$ is called \emph{positively closed} if every homomorphism from $M$ into another model of $T$ is an immersion.

Every first-order theory can be seen as an h-inductive theory through a process called \emph{positive Morleyisation}: for each formula $\phi(\bar{x})$ we introduce a relation symbol $R_\phi(\bar{x})$, and add axioms expressing $\forall \bar{x}(\neg \phi(\bar{x}) \leftrightarrow R_\phi(\bar{x}))$. Homomorphisms between models of such a theory will be elementary embeddings, and thus immersions. So every model is positively closed. Even though we expand the language in this process, it is clear that the category of models is not changed. So first-order logic can be studied as a special case of positive logic, and we will use the same notation.

For an h-inductive theory $T$, we define $\Mod(T)$ to be the category of positively closed models of $T$, with homomorphisms (and thus immersions) as arrows. For $\SubMod(T)$ we take as objects pairs $(A, M)$, where $A \subseteq M$ and $M$ is a positively closed model of $T$. An arrow $f: (A, M) \to (B, N)$ is then a function $f: A \to B$ that is an immersion on those sets. That is, for all $\bar{a} \in A$ and all positive existential $\phi(\bar{x})$, we have
\[
M \models \phi(\bar{a}) \quad \Longleftrightarrow \quad N \models \phi(f(\bar{a})).
\]
One easily checks that both these categories have directed colimits, which are calculated by taking the union in the usual way. The presentability of objects and accessibility for these categories is the same as in the first-order case. So we again get $(\Mod(T), \Mod(T))$ and $(\SubMod(T), \Mod(T))$ as AECats.

We have enough compactness in positive logic to prove the amalgamation property. The proof is similar to the first-order case. In fact, the essence of the argument for positive logic appears in \cite[Lemma 1.37]{ben-yaacov_positive_2003}, when combined with the method of diagrams.
\end{example}
\begin{example}
\thlabel{ex:continuous-logic}
In this example we consider continuous logic (see \cite{ben-yaacov_model_2008}). Given a continuous theory $T$, we can consider its category of models $\MetMod(T)$ with elementary embeddings. We use different notation to emphasise the continuous setting (even though we can encode first-order theories as continuous theories). We can also again consider subsets of such models, so we define $\SubMetMod(T)$ to be pairs $(A, M)$ where $M$ is a model of $T$ and $A \subseteq M$. An arrow $f: (A, M) \to (B, N)$ will be what is called an \emph{elementary map} in \cite[Definition 4.3(3)]{ben-yaacov_model_2008}.

The right notion of size is that of \emph{density character}: the smallest cardinality of a dense subset of the space. Denote the density character of a space $X$ by $\density(X)$. We then have for all $\lambda$ that $(A, M)$ in $\SubMetMod(T)$ is $\lambda$-presentable precisely when $\density(A) < \lambda$. For $\MetMod(T)$ we have that $M$ is $\lambda^+$-presentable precisely when $\density(M) < \lambda^+$, for all $\lambda$ such that the signature of $T$ has at most cardinality $\lambda$. As before, $(\MetMod(T), \MetMod(T))$ and $(\SubMetMod(T), \MetMod(T))$ form AECats with AP.

Checking all the properties is straightforward but lengthy. The reason they hold is due to the same tools (for which there exist a continuous alternative): directed colimits, L\"owenheim-Skolem and compactness.
\end{example}
\begin{example}
\thlabel{ex:aec}
Shelah's AECs are in particular also AECats. That is, given an AEC $\K$, we can view it as a category by taking as arrows the $\K$-embeddings: maps $f: M \to N$ such that $f(M) \preceq_\K N$ and $f$ is an isomorphism from $M$ onto $f(M)$. The Tarski-Vaught chain axioms are saying precisely that $\K$ has colimits of chains (and hence directed colimits, see \thref{rem:chains-vs-directed-colimits}). The L\"owenheim-Skolem axiom then guarantees that $\K$ is accessible. By definition every arrow in $\K$ is a monomorphism, so $(\K, \K)$ is an AECat. Of course, an AEC with AP will then be an AECat with AP.

We can generalise the construction of $\SubMod(T)$ to AECs. Let $\K$ be an AEC with AP. We  define a category $\SubSet(\K)$ as follows. The objects are pairs $(A, M)$ where $A \subseteq M$ and $M \in \K$. An arrow $f: (A, M) \to (B, N)$ is then a map $f: A \to B$ such that there are $\K$-embeddings $g: M \to U$ and $h: N \to U$ with $U \in \K$, making the following diagram commute:
\[
\begin{tikzcd}
                                   & U &                    \\
M \arrow[ru, "g"]                  &   & N \arrow[lu, "h"'] \\
A \arrow[rr, "f"'] \arrow[u, hook] &   & B \arrow[u, hook] 
\end{tikzcd}
\]
The amalgamation property is needed to compose arrows. Now suppose that our AEC is fully $<\!\lambda$-type short over the empty set, as defined in \cite[Definition 3.3]{boney_tameness_2014}. Let $(A_i, M_i)_{i \in I}$ be a $\lambda$-directed diagram in $\SubSet(\K)$ and suppose that the obvious cocone $(\bigcup_{i \in I} A_i, M)$ exists for some $M \in \K$. Then $(\bigcup_{i \in I} A_i, M)$ is the colimit of $(A_i, M_i)_{i \in I}$, where the type shortness is necessary to prove the universal property. Under these conditions $\SubSet(\K)$ has $\lambda$-directed colimits. It is then straightforward to verify that for $\kappa \geq \lambda$ an object $(A, M)$ of $\SubSet(\K)$ is $\kappa$-presentable precisely when $|A| < \kappa$. It follows that for $\kappa \geq \lambda + \operatorname{LS}(\K)^+$, we have that $\SubSet(\K)$ is $\kappa$-accessible. So $(\SubSet(\K), \K)$ is an AECat with AP.

Note that for $\lambda = \aleph_0$ we have automatically that the cocone $(\bigcup_{i \in I} A_i, M)$ exists. This can be seen by following the proofs in for example \cite[Remark 2.34]{ben-yaacov_positive_2003} or \cite[Lemma 1.3]{buechler_simple_2003}.
\end{example}
\begin{example}
\thlabel{ex:cat}
In \cite{ben-yaacov_positive_2003} the concept of a \emph{compact abstract theory}, or \emph{cat}, is introduced. Although no formal definition is given, it turns out that in practice such a cat is in fact an AECat with AP. See also \thref{ex:elementary-category}.
\end{example}
\begin{example}
\thlabel{ex:quasiminimal-excellent-classes}
In this example we consider Zilber's quasiminimal excellent classes. We use the terminology from \cite{kirby_quasiminimal_2010}. Let $\C$ be a quasiminimal excellent class, also satisfying axiom IV. Then $\C$ together with strong embeddings is a finitely accessible category, where $M \in \C$ is $\kappa$-presentable precisely when $M$ has dimension $< \kappa$. So $(\C, \C)$ is an AECat with AP.
\end{example}
We have now covered how existing frameworks can be placed in the framework of AECats. In section \ref{sec:finitely-short-aecats} we will do the converse. There we discuss how, under some additional assumptions, AECats can be placed in existing frameworks.

\section{Galois types}
\label{sec:galois-types}
In \cite[Definition II.1.9]{shelah_universal_1987} types are considered as the orbit of a tuple under some automorphism group. Later this idea was generalised by replacing the automorphisms by embeddings into a bigger model, and the name Galois type was introduced (see \cite{grossberg_classification_2002}). We use this idea, replacing elements by arrows.
\begin{definition}
\thlabel{def:extension}
Let $M$ be a model in an AECat. An \emph{extension} of $M$ is an arrow $M \to N$, where $N$ is some model.
\end{definition}
\begin{convention}
\thlabel{conv:extending-monomorphisms}
Usually, there will be only one relevant extension of models. So to prevent cluttering of notation we will not give such an extension a name. Given such an extension $M \to N$ and some arrow $a: A \to M$ we will then denote the arrow $A \xrightarrow{a} M \to N$ by $a$ as well.
\end{convention}
\begin{definition}
\thlabel{def:galois-type}
Let $(\C, \M)$ be an AECat with AP. We will use the notation $((a_i)_{i \in I}; M)$ to mean that the $a_i$ are arrows into $M$ and that $M$ is a model. We will denote the domain of $a_i$ by $A_i$, unless specified otherwise.

We say that two tuples $((a_i)_{i \in I}; M)$ and $((a_i')_{i \in I}; M')$ have the same \emph{Galois type}, and write
\[
\gtp((a_i)_{i \in I}; M) = ((a_i')_{i \in I}; M'),
\]
if $\dom(a_i) = \dom(a_i')$ for all $i \in I$, and there is a common extension $M \to N \leftarrow M'$, such that, for all $i \in I$, $a_i$ and $a_i'$ give the same arrow into $N$. That is, the following commutes for all $i \in I$:
\[
\begin{tikzcd}
 & N &  \\
M \arrow[ru] &  & M' \arrow[lu] \\
 & A_i \arrow[lu, "a_i"] \arrow[ru, "a_i'"'] & 
\end{tikzcd}
\]
\end{definition}
Note that AP ensures that having the same Galois type is an equivalence relation. For this reason, we will only be interested in AECats with AP in the rest of this paper.
\begin{fact}
\thlabel{fact:galois-type-invariance-under-extension}
Let $M \to N$ be any extension, then for any tuple $((a_i)_{i \in I}; M)$:
\[
\gtp((a_i)_{i \in I}; M) = \gtp((a_i)_{i \in I}; N).
\]
\end{fact}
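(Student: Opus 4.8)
The plan is to verify the definition of equal Galois type directly, using $N$ itself as the common extension. Recall from \thref{def:galois-type} that to show $\gtp((a_i)_{i \in I}; M) = \gtp((a_i)_{i \in I}; N)$ we must check two things: that the domains of the corresponding arrows agree, and that there is a common extension $M \to P \leftarrow N$ making the relevant triangles commute. The domain condition is immediate, since in both tuples the arrow labelled $a_i$ has domain $A_i$.

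For the common extension I would simply take $P = N$, with the given extension $M \to N$ on the left and the identity $\mathrm{id}_N \colon N \to N$ on the right. Since $N$ is a model and $\mathrm{id}_N$ is an arrow in $\M$, this is a legitimate common extension in the sense of \thref{def:extension}. It then remains to check that for each $i$ the two resulting arrows into $N$ coincide. By \thref{conv:extending-monomorphisms}, the arrow denoted $a_i$ in the tuple $((a_i)_{i \in I}; N)$ is by definition the composite $A_i \xrightarrow{a_i} M \to N$; on the other side, composing $a_i \colon A_i \to M \to N$ with $\mathrm{id}_N$ gives exactly the same composite. Hence both paths around the triangle are literally the same arrow, so the diagram commutes.

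I do not expect any real obstacle: the statement is a direct consequence of the definitions, the only idea being that a tuple always has the same Galois type as its image under an extension, witnessed by the identity morphism on the larger model. It is perhaps worth remarking that the argument uses nothing beyond the existence of identities, and in particular does not invoke the amalgamation property — AP is only what guarantees that ``having the same Galois type'' is an equivalence relation to begin with, as noted after \thref{def:galois-type}.
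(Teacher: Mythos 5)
Your proof is correct: the paper states this as a \emph{Fact} without proof, and the intended argument is exactly the one you give — take $N$ itself as the common extension via $M \to N$ and $\mathrm{id}_N$, so that both composites $A_i \to N$ are literally the same arrow. Your observation that AP plays no role here is also accurate.
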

This is a good example of \thref{conv:extending-monomorphisms}. A more precise statement would be to give the extension $M \to N$ a name, say $f$, then for any $((a_i)_{i \in I}; M)$ we have that $\gtp((a_i)_{i \in I}; M) = \gtp((fa_i)_{i \in I}; N)$.

Since all arrows in an AECat are monomorphisms, every arrow will represent a subobject. Later, in section \ref{sec:independence-relations}, we will work a lot with subobjects. So we extend our notation to subobjects.
\begin{definition}
\thlabel{def:galois-type-subobjects}
Let $((A_i)_{i \in I}; M)$ and $((A_i')_{i \in I}; M')$ be two tuples of subobjects in an AECat with AP. Then we say that they have the same \emph{Galois type}, denoted as
\[
\gtp((A_i)_{i \in I}; M) = \gtp((A_i')_{i \in I}; M'),
\]
if there are extensions $M \to N \leftarrow M'$ such that $A_i = A_i'$ as subobjects of $N$ (we consider them subobjects of $N$ by composing with the relevant extension).
\end{definition}
\begin{proposition}
\thlabel{prop:galois-type-subobjects-representatives}
Let $((A_i)_{i \in I}; M)$ and $((A_i')_{i \in I}; M')$ be two tuples of subobjects. Then
\[
\gtp((A_i)_{i \in I}; M) = \gtp((A_i')_{i \in I}; M')
\]
precisely if given any representatives $(a_i)_{i \in I}$ of $(A_i)_{i \in I}$ there are representatives $(a_i')_{i \in I}$ of $(A_i')_{i \in I}$, such that
\[
\gtp((a_i)_{i \in I}; M) = \gtp((a_i')_{i \in I}; M').
\]
\end{proposition}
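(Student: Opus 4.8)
The plan is to unwind the two definitions of Galois type---the one for arrows (\thref{def:galois-type}) and the one for subobjects (\thref{def:galois-type-subobjects})---and to bridge them using the elementary fact that two monomorphisms $f \colon X \to N$ and $g \colon Y \to N$ represent the same subobject of $N$ exactly when there is an isomorphism $h \colon X \to Y$ with $gh = f$. Since every arrow in an AECat is a monomorphism (\thref{def:aecat}), this fact is available for every arrow we meet. Throughout I will freely invoke \thref{conv:extending-monomorphisms}, writing $a$ also for the composite of $a$ with a fixed extension into a larger model.

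For the forward direction, I would suppose $\gtp((A_i)_{i \in I}; M) = \gtp((A_i')_{i \in I}; M')$, witnessed by extensions $M \to N \leftarrow M'$ with $A_i = A_i'$ as subobjects of $N$ for all $i$. Fix any representatives $(a_i)_{i \in I}$ of $(A_i)_{i \in I}$, say with $\dom(a_i) = D_i$. For each $i$, choose an arbitrary representative $c_i \colon C_i \to M'$ of $A_i'$ and compose it into $N$; since $a_i$ and $c_i$ then represent the same subobject $A_i = A_i'$ of $N$, the bridging fact yields an isomorphism $h_i \colon D_i \to C_i$ with $c_i h_i = a_i$ as arrows into $N$. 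Setting $a_i' := c_i h_i$ gives a representative of $A_i'$ in $M'$ with $\dom(a_i') = D_i = \dom(a_i)$ whose composite into $N$ equals that of $a_i$. Hence the same extensions $M \to N \leftarrow M'$ witness $\gtp((a_i)_{i \in I}; M) = \gtp((a_i')_{i \in I}; M')$, as required.

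For the converse, I would apply the hypothesis to one (arbitrary) choice of representatives $(a_i)_{i \in I}$ of $(A_i)_{i \in I}$ to obtain representatives $(a_i')_{i \in I}$ of $(A_i')_{i \in I}$ together with an extension $M \to N \leftarrow M'$ along which $a_i$ and $a_i'$ give the same arrow into $N$ for all $i$. Composing into $N$, the arrow $a_i$ represents $A_i$ as a subobject of $N$ and $a_i'$ represents $A_i'$; as these arrows are literally equal, $A_i = A_i'$ as subobjects of $N$. Thus the extensions $M \to N \leftarrow M'$ witness $\gtp((A_i)_{i \in I}; M) = \gtp((A_i')_{i \in I}; M')$.

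The only real subtlety---the step I expect to need the most care---is the domain matching in the forward direction: the arrow definition insists on literally equal domains $\dom(a_i) = \dom(a_i')$, so I cannot simply take an arbitrary representative of $A_i'$ but must precompose with the isomorphism $h_i$ supplied by the subobject equality. Two further bookkeeping points are that a single pair of extensions $M \to N \leftarrow M'$ must serve all $i$ simultaneously (which is exactly what both definitions provide) and that the monomorphism hypothesis is what upgrades ``factors through'' to ``isomorphism'', so it cannot be dropped.
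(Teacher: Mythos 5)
Your proof is correct and follows essentially the same route as the paper: the forward direction precomposes an arbitrary representative of $A_i'$ with the isomorphism of domains supplied by the equality of subobjects in $N$, and the converse is the trivial observation that equal arrows into $N$ represent equal subobjects. The domain-matching point you flag is exactly the care the paper's proof takes as well.
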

\begin{proof}
The right to left direction is trivial. For the other direction, we let $M \to N \leftarrow M'$ be such that $A_i = A_i'$ as subobjects of $N$ for all $i \in I$. Let representatives $(a_i)_{i \in I}$ of $(A_i)_{i \in I}$ be given and pick some representatives $(b_i: A_i' \to M')_{i \in I}$ of $(A_i')_{i \in I}$. Because $A_i = A_i'$ as subobjects of $N$, there must be an isomorphism $f_i: A_i \to A_i'$ for each $i \in I$, making the following diagram commute:
\[
\begin{tikzcd}
                                       & N &                         \\
M \arrow[ru]                           &   & M' \arrow[lu]           \\
A_i \arrow[u, "a_i"] \arrow[rr, "f_i"] &   & A_i' \arrow[u, "b_i"']
\end{tikzcd}
\]
In particular, $b_i f_i$ also represents $A_i'$ as a subobject of $M'$. So we can take $a_i' = b_i f_i$ for all $i \in I$. By construction we then have that $\gtp((a_i)_{i \in I}; M) = \gtp((a_i')_{i \in I}; M')$.
\end{proof}
The following example illustrates why we have to be careful when moving to representatives of subobjects.
\begin{example}
\thlabel{ex:galois-types-on-subobjects}
Consider the category of infinite sets with injective functions. This is easily seen to be an AECat with AP if we take $\M$ to be the entire category. Alternatively, this is precisely $\Mod(T_\text{inf})$, where $T_\text{inf}$ is the theory of infinite sets, and is thus an AECat with AP as discussed in \thref{ex:category-of-models-aecat}.

Let $f: \N \to \N$ be the bijection that swaps the odd and even numbers. So $f(0) = 1$, $f(1) = 0$, $f(2) = 3$, and so on. Denote by $2\N$ the set of even numbers and let $e: 2\N \to \N$ be the inclusion. So we have the following commuting diagram:
\[
\begin{tikzcd}
                                                               & \N &                \\
\N \arrow[ru, "f"', bend right] \arrow[ru, "Id_\N", bend left]\arrow[loop left, "f"] &    & 2\N \arrow[lu, "e"']
\end{tikzcd}
\]
We denote by $[Id_\N]$ the subobject represented by $Id_\N$, and likewise for $f$ and $e$. Then $[Id_\N] = [f]$, so we definitely have
\[
\gtp([Id_\N], [e]; \N) = \gtp([f], [e]; \N).
\]
However, we cannot have
\[
\gtp(Id_\N, e; \N) = \gtp(f, e; \N).
\]
So we cannot just pick any representatives of the subobjects.
\end{example}
The intuition here is that a type cares about the way a certain set is enumerated, while a subobject only cares about the set itself. So different enumerations of a certain set may yield incompatible types, while they represent the same subobject.
\begin{proposition}
\thlabel{prop:galois-types-basic-observations}
Suppose we have $\gtp((a_i)_{i \in I}; M) = \gtp((a_i')_{i \in I}; M')$, then:
\begin{enumerate}[label=(\roman*)]
\item for any $I_0 \subseteq I$ we have $\gtp((a_i)_{i \in I_0}; M) = \gtp((a_i')_{i \in I_0}; M')$;
\item suppose that we also have an arrow $b_i: B_i \to A_i$ for each $i \in I$, then
\[
\gtp((a_i)_{i \in I}, (a_i b_i)_{i \in I}; M) = \gtp((a_i')_{i \in I}, (a_i' b_i)_{i \in I}; M')
\]
and thus $\gtp((a_i b_i)_{i \in I}; M) = \gtp((a_i' b_i)_{i \in I}; M')$;
\item let $b: B \to M$ be some arrow, then there is an extension $M' \to N$ and some $b': B \to N$ such that $\gtp(b, (a_i)_{i \in I}; M) = \gtp(b', (a_i')_{i \in I}; N)$.
\end{enumerate}
\end{proposition}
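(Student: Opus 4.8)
The plan is to prove all three parts by reusing a single common extension witnessing the hypothesis. By \thref{def:galois-type}, the assumption $\gtp((a_i)_{i \in I}; M) = \gtp((a_i')_{i \in I}; M')$ supplies a model $N$ together with extensions $f \colon M \to N$ and $g \colon M' \to N$ such that $\dom(a_i) = \dom(a_i') = A_i$ and $f a_i = g a_i'$ for every $i \in I$. The content of the proposition is essentially that this same commuting data keeps witnessing the desired equalities after one restricts the index set, precomposes, or transports a new arrow into $N$, so I would fix $(f, g)$ once and reuse it throughout.

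For part (i) I would keep the extensions $(f, g)$ unchanged and observe that $f a_i = g a_i'$ already holds for all $i \in I$, hence in particular for all $i \in I_0$. Since the domains agree on $I_0$ as well, the pair $(f, g)$ directly witnesses $\gtp((a_i)_{i \in I_0}; M) = \gtp((a_i')_{i \in I_0}; M')$, so this part is immediate.

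For part (ii) I would again take $(f, g)$. The composites $a_i b_i \colon B_i \to M$ and $a_i' b_i \colon B_i \to M'$ share the common domain $B_i$, and by associativity of composition $f(a_i b_i) = (f a_i) b_i = (g a_i') b_i = g(a_i' b_i)$. Combined with the original equalities $f a_i = g a_i'$, this shows that $(f, g)$ witnesses the first displayed equality. The second displayed equality then follows from part (i), restricting to the subtuple consisting of the $(a_i b_i)$-components.

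Part (iii) is the only place where a genuinely new arrow appears, and is where I expect the (small amount of) real content to lie: the idea is to transport $b$ into the common model $N$. I would take $N$ to be the model above with the extension $g \colon M' \to N$, and set $b' = f b \colon B \to N$; note $\dom(b') = B = \dom(b)$. To verify $\gtp(b, (a_i)_{i \in I}; M) = \gtp(b', (a_i')_{i \in I}; N)$, I would use the trivial common extension given by $f \colon M \to N$ and $\mathrm{id}_N \colon N \to N$. Under these, $b$ maps to $f b = b'$, while each $a_i$ maps to $f a_i = g a_i'$, which is exactly the image of $a_i'$ viewed (via \thref{conv:extending-monomorphisms}, using the extension $g$) as an arrow into $N$ and then pushed forward by $\mathrm{id}_N$. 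Hence $(f, \mathrm{id}_N)$ witnesses the required equality. The only thing to double-check is that $N$ is indeed a model and that $M' \to N$ is a legitimate extension, both of which hold since $N$ and $g$ come from the witnessing data of the hypothesis.
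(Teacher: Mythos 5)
Your proposal is correct and matches the paper's own proof: parts (i) and (ii) are handled by reusing the witnessing cospan, and part (iii) sets $b' = fb$ and observes that $(f, \mathrm{id}_N)$ (equivalently, \thref{fact:galois-type-invariance-under-extension} applied to $f$) witnesses the required equality, with $g$ serving as the extension $M' \to N$. No gaps.
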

\begin{proof}
For (i) and (ii) the common extension witnessing the original equality will also witness the new equality. The last sentence from (ii) follows from applying (i).

For (iii) let $M \xrightarrow{f} N \xleftarrow{g} M'$ be witnesses of $\gtp((a_i)_{i \in I}; M) = \gtp((a_i')_{i \in I}; M')$. We define $b' = fb$, so that:
\[
\gtp(b, (a_i)_{i \in I}; M) =
\gtp(fb, (fa_i)_{i \in I}; N) =
\gtp(b', (ga_i')_{i \in I}; N).
\]
Then the result follows directly if we take the extension $M' \to N$ to be $g$, so that we would write the right-hand side as $\gtp(b', (a_i')_{i \in I}; N)$.
\end{proof}
\begin{proposition}
\thlabel{prop:factorisation-captured-by-galois-type}
Suppose we have $(a, b; M)$, such that $a = bi$ for some arrow $i$. If then $(a', b'; M')$ is such that
\[
\gtp(a, b; M) = \gtp(a', b'; M'),
\]
then $a'$ factors through $b'$ in the same way: $a' = b'i$.
\end{proposition}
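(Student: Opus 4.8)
The plan is to unwind the definition of equal Galois types into a commuting diagram in a common extension, push the factorisation equation $a = bi$ through that diagram, and then cancel a monomorphism. First I would record the bookkeeping forced by the definition: since $\gtp(a, b; M) = \gtp(a', b'; M')$, the domains must agree, so writing $a, a' \colon A \to \cdot$ and $b, b' \colon B \to \cdot$, the factorising arrow $i$ has the form $i \colon A \to B$, and the expression $b'i$ is well-typed. This is the step where I would be careful, because the whole statement only makes sense once we know $i$ can be reused with $b'$ in place of $b$.

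Next I would invoke \thref{def:galois-type} to obtain extensions $M \xrightarrow{f} N \xleftarrow{g} M'$ witnessing the equality of types, so that $fa = ga'$ and $fb = gb'$ as arrows into $N$. The core computation is then a short chain of equalities in $N$:
\[
ga' = fa = f(bi) = (fb)i = (gb')i = g(b'i).
\]
Here the first and fourth equalities are the witnessing conditions, the second is the hypothesis $a = bi$, and the remaining steps are associativity of composition.

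Finally I would use the defining axiom of an AECat that all arrows are monomorphisms (part (2) of \thref{def:aecat}): the extension arrow $g \colon M' \to N$ is monic, so from $ga' = g(b'i)$ we may cancel $g$ on the left to conclude $a' = b'i$, as desired. There is no real obstacle here beyond the typing check in the first paragraph; the substance of the argument is precisely that monomorphism-cancellation lets an equation verified in the common extension be transported back to $M'$. The only thing to watch is not to conflate the several copies of $A$ and $B$, but keeping the source objects fixed (as the definition of Galois type guarantees) makes the diagram and the cancellation completely mechanical.
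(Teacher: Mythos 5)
Your proposal is correct and matches the paper's proof essentially verbatim: both unwind the definition of Galois type to get witnessing extensions $M \xrightarrow{f} N \xleftarrow{g} M'$, compute $ga' = fa = fbi = gb'i$, and cancel the monomorphism $g$.
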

\begin{proof}
From $\gtp(a, b; M) = \gtp(a', b'; M')$ we get extensions $M \to N \leftarrow M'$ and a diagram
\[
\begin{tikzcd}
                  & N                                                                         &                     \\
M \arrow[ru, "f"] & B \arrow[l, "b"] \arrow[r, "b'"']                                         & M' \arrow[lu, "g"'] \\
                  & A \arrow[lu, "a", bend left] \arrow[u, "i"] \arrow[ru, "a'"', bend right] &                    
\end{tikzcd}
\]
where everything commutes by definition except for possibly the bottom right triangle (i.e. the triangle $a' = b'i$). So we have $ga' = fa = fbi = gb'i$ and so $a' = b'i$ because $g$ is a monomorphism.
\end{proof}
\begin{remark}
\thlabel{rem:monster-objects}
It is standard in model theory to work with monster models. In the general category-theoretic setting this would still be possible. For example, in \cite{lieberman_classification_2014} it is shown that such monster objects exist in any accessible category with directed colimits and the amalgamation property, assuming some additional set theory. We choose not to work with monster objects. This might come at some notational cost, but it keeps everything within the standard set theory.
\end{remark}
\section{Finitely short AECats}
\label{sec:finitely-short-aecats}
In this section we discuss an important property that connects AECats with existing frameworks. Nothing in this section will be used in the rest of this paper.

This property is to have some locality for Galois types (inspired by \cite{grossberg_categoricity_2006}): a Galois type of an infinite tuple should be determined by all its finite subtuples. This can even be used to get some compactness for directed systems of Galois types (see e.g. \cite[Remark 2.34]{ben-yaacov_positive_2003} and \cite[Lemma 1.3]{buechler_simple_2003}).
\begin{definition}
\thlabel{def:finitely-short}
We say that an AECat is \emph{finitely short} if for any two (infinite) tuples $((a_i)_{i \in I}; M)$ and $((a_i')_{i \in I}; M')$ such that for all finite $I_0 \subseteq I$
\[
\gtp((a_i)_{i \in I_0}; M) = \gtp((a_i')_{i \in I_0}; M'),
\]
we have that
\[
\gtp((a_i)_{i \in I}; M) = \gtp((a_i')_{i \in I}; M').
\]
\end{definition}
\begin{example}
\thlabel{ex:tameness-in-category-of-models}
The AECats $(\Mod(T), \Mod(T))$ and $(\SubMod(T), \Mod(T))$ from \thref{ex:category-of-models-positive-theory} are both finitely short (recall that this includes the first-order case), because Galois types coincide with the usual syntactic types.

For the same reasons, for a continuous theory $T$, $(\MetMod(T), \MetMod(T))$ and $(\SubMetMod(T), \MetMod(T))$ from \thref{ex:continuous-logic} are finitely short.

An AEC $\K$ with AP that is fully $<\!\aleph_0$-type short over the empty set yields AECats $(\K, \K)$ and $(\SubSet(\K), \K)$, as per \thref{ex:aec}, which are both finitely short.
\end{example}
\begin{example}
\thlabel{ex:elementary-category}
In \thref{ex:cat} we mentioned cats from \cite{ben-yaacov_positive_2003}. One definition there allows for a nice comparison to AECats, namely that of an \emph{elementary category (with amalgamation)} \cite[Definition 2.27]{ben-yaacov_positive_2003}. This is a concrete category $\C$ that satisfies a few additional assumptions, similar to the axioms of an AEC. Every such elementary category $\C$ will form an AECat with AP as $(\C, \C)$, if we additionally assume $\C$ to be accessible\footnote{Technically, \cite[Definition 2.27]{ben-yaacov_positive_2003} does not require the existence of directed colimits but something slightly weaker called the ``elementary chain property''. However, it is likely that actually directed colimits are meant and in practice this is what we have.}.

Conversely, given an AECat $(\C, \M)$ we can make it into a concrete category using a version of the Yoneda embedding. Let $\lambda$ be such that $\C$ is $\lambda$-accessible and let $\A$ be the full subcategory of $\lambda$-presentable objects in $\C$. Then there is a fully faithful canonical functor $E: \C \to \Set^{\A^\op}$ that preserves $\lambda$-directed colimits, see \cite[1.25 and 2.8]{adamek_locally_1994}. If $(\C, \M)$ has AP then taking the image of $\M$ under $E$, we obtain an elementary category with amalgamation.

In \cite[Definition 2.32]{ben-yaacov_positive_2003} a few properties are defined for the Galois types:
\begin{itemize}
\item \emph{type boundedness}: this is always true in an AECat, see \thref{prop:set-of-galois-types};
\item \emph{type locality}: this is precisely what we called being finitely short;
\item \emph{weak compactness}: this holds for example in categories obtained from a first-order, positive or continuous theory.
\end{itemize}
\end{example}
Note that \thref{ex:elementary-category} does generally not yield an AEC. For example, take $\C = \M$ to be the category of infinite sets with injective functions. If we make this category into a concrete category through the functor $E: \C \to \Set^{\A^\op}$ then $E(\omega + \omega)$ contains the arrow $f: \omega \to \omega + \omega$ where $f(n) = \omega + n$. If this would be an AEC then $E(\omega + \omega) = \bigcup_{n < \omega} E(\omega + n)$, but $f$ is not in $E(\omega + n)$ for any $n$. So the Tarski-Vaught chain axiom for AECs fails. The point is of course that a directed colimit can be more than just the union of underlying sets.

In \cite[Corollary 5.7]{beke_abstract_2012} a characterisation is given of those accessible categories that are equivalent to an AEC. It also describes how to construct an AEC from such an accessible category, through a construction very similar to \thref{ex:elementary-category}.
\begin{example}
\thlabel{ex:homogeneous-model-theory}
Let $(\C, \M)$ be a finitely short AECat with AP. Suppose furthermore that $\M$ has the \emph{joint embedding property}. That is, for any two models $M_1$ and $M_2$ there is a third model $N$ with arrows $M_1 \to N \leftarrow M_2$. Then there is a strong connection with homogeneous model theory \cite{buechler_simple_2003}. We sketch the construction and would like to thank an anonymous referee for pointing this out.

As discussed in \thref{ex:elementary-category}, we can turn $\M$ into a concrete category. So using the usual tools we can build a monster model $\MM$, which we will fit in the framework of \cite{buechler_simple_2003}. The elements in $\MM$ are arrows in $\C$, and having the same Galois type corresponds to having the same orbital type in $\MM$. For every Galois type of a finite tuple we add a relation symbol, and we close these under finite conjunctions and disjunctions. Then two (infinite) tuples of elements are in the same orbit of $\MM$ iff they have the same Galois type iff their finite subtuples have the same Galois type iff the finite subtuples satisfy the same relation symbols.
\end{example}
The constructions in \thref{ex:elementary-category} and \thref{ex:homogeneous-model-theory} do not change our category, they only add data to make it into a concrete category. So any notion that is defined on just the objects and arrows in our category is preserved by this operation. In particular independence relations, as we define in section \ref{sec:independence-relations}, are preserved. It would be interesting to study these connections further, but that is beyond the scope of this paper.
\begin{definition}
\thlabel{def:galois-type-set}
Let $(\C, \M)$ be an AECat with AP. For a tuple $(A_i)_{i \in I}$ of objects in $\C$, let $S((A_i)_{i \in I})$ be the collection of all tuples $((a_i)_{i \in I}; M)$ such that $\dom(a_i) = A_i$. We define the \emph{Galois type set} $\Sgtp((A_i)_{i \in I})$ as:
\[
\Sgtp((A_i)_{i \in I}) = S((A_i)_{i \in I}) / \sim_{gtp},
\]
where $\sim_{gtp}$ is the equivalence relation of having the same Galois type.
\end{definition}
An AECat is generally a large category. So $S((A_i)_{i \in I})$ will generally be a proper class. Below we prove that $\Sgtp((A_i)_{i \in I})$ is small (i.e. a set), so the name is justified.

The above notation clashes with standard notation where one would expect $(A_i)_{i \in I}$ to denote the parameters of the types. However, for our Galois types the difference between domain and parameters fades.

\thref{def:galois-type-set} allows us to talk about $\gtp((a_i)_{i \in I}; M)$ as an object in itself: it is one of the equivalence classes in $\Sgtp((A_i)_{i \in I})$.
\begin{proposition}
\thlabel{prop:set-of-galois-types}
Let $(\C, \M)$ be an AECat with AP. Then for any tuple $(A_i)_{i \in I}$ of objects $\Sgtp((A_i)_{i \in I})$ is a set.
\end{proposition}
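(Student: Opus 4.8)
The plan is to exhibit, for a single well-chosen regular cardinal $\mu$, a \emph{set} of representatives that meets every $\sim_{gtp}$-class, so that $\Sgtp((A_i)_{i \in I})$ is a quotient of a set and hence itself a set. Throughout I treat $I$ as a set (otherwise the indexed tuples are not defined). The whole argument rests on reducing each Galois type to one carried by a $\mu$-presentable model.

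\emph{Choosing $\mu$.} Each $A_i$ is $\mu_i$-presentable in $\C$ for some regular $\mu_i$, since every object of an accessible category is presentable for some cardinal, and presentability is upward closed. Using \thref{rem:category-with-models-accessible-inclusion-functor}, the cardinals for which both $\C$ and $\M$ are accessible and the inclusion preserves presentable objects are arbitrarily large, so I would fix a regular $\mu$ with $\mu > |I|$, with $\mu \geq \mu_i$ for all $i \in I$ (so every $A_i$ is $\mu$-presentable in $\C$), with $\M$ being $\mu$-accessible, and with the inclusion $\M \hookrightarrow \C$ preserving $\mu$-presentable objects.

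\emph{Reduction to $\mu$-presentable models.} Given any representative $((a_i)_{i \in I}; M)$, write $M = \colim_{j \in J} M_j$ as a $\mu$-directed colimit of $\mu$-presentable models $M_j \in \M$; because the inclusion preserves $\mu$-directed colimits, this is also a colimit in $\C$. Since each $A_i$ is $\mu$-presentable, the arrow $a_i \colon A_i \to M$ factors through some $M_{j(i)}$. As $|I| < \mu$ and $J$ is $\mu$-directed, the family $(j(i))_{i \in I}$ has a common upper bound $j^\ast \in J$, and hence every $a_i$ factors as $A_i \xrightarrow{a_i'} M_{j^\ast} \to M$. The map $M_{j^\ast} \to M$ is an extension of models, so by \thref{fact:galois-type-invariance-under-extension} we get $\gtp((a_i')_{i \in I}; M_{j^\ast}) = \gtp((a_i)_{i \in I}; M)$, with $M_{j^\ast}$ a $\mu$-presentable model. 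Thus every class in $\Sgtp((A_i)_{i \in I})$ has a representative whose model is $\mu$-presentable.

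\emph{Counting.} In a $\mu$-accessible category the $\mu$-presentable objects form an essentially small full subcategory (\cite[Remark 1.19]{adamek_locally_1994}), so there is a set $\mathcal{N}$ of $\mu$-presentable models of $\M$ containing a representative of each isomorphism class. For each $N \in \mathcal{N}$, the tuples of arrows with the correct domains form the set $\prod_{i \in I} \Hom_\C(A_i, N)$, which is a set because $I$ is a set and each $\Hom$-class is a set. Taking the union over $N \in \mathcal{N}$ yields a set of tuples that, by the reduction step, meets every Galois type. Hence $\Sgtp((A_i)_{i \in I})$ is the image of this set under the quotient by $\sim_{gtp}$, and is therefore a set. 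The one delicate point, and the step I would be most careful about, is the \emph{simultaneous} factorisation: a single $\mu$-presentable submodel must absorb all of the arrows $a_i$ at once, which is exactly why the bound $\mu > |I|$ together with $\mu$-directedness is needed rather than merely factoring each arrow individually.
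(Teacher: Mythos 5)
Your proposal is correct and follows essentially the same route as the paper's own proof: choose a single regular cardinal $\mu$ above $|I|$ and the presentability ranks of the $A_i$ with the inclusion $\M \hookrightarrow \C$ well-behaved, factor every tuple simultaneously through one $\mu$-presentable model in a $\mu$-directed colimit decomposition, and count the resulting set $\coprod_{N} \prod_{i \in I}\Hom(A_i,N)$ over a skeleton of $\mu$-presentable models. The delicate point you flag (needing $\mu > |I|$ for the simultaneous factorisation) is exactly the step the paper emphasises as well.
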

\begin{proof}
We prove that there is a subset $S'((A_i)_{i \in I}) \subseteq S((A_i)_{i \in I})$, such that for every tuple $((a_i)_{i \in I}; M) \in S((A_i)_{i \in I})$, there is some $((a_i')_{i \in I}; M') \in S'((A_i)_{i \in I})$ with $\gtp((a_i)_{i \in I}; M) = \gtp((a_i')_{i \in I}; M')$.

Let $\lambda$ be such that every $A_i$ is $\lambda$-presentable, $\lambda > |I|$ and the inclusion functor $\M \hookrightarrow \C$ is $\lambda$-accessible and preserves $\lambda$-accessible objects. Such a $\lambda$ must exist since each object in an accessible category is presentable by \cite[Proposition 1.16]{adamek_locally_1994}, and by \thref{rem:category-with-models-accessible-inclusion-functor}.

Let $\M_\lambda$ be (a skeleton of) all the models that are $\lambda$-presentable. Then $\M_\lambda$ is a set (see the remark after \cite[Definition 1.9]{adamek_locally_1994}). For an object $M$. We define:
\[
S'((A_i)_{i \in I}) = \coprod_{M \in \M_\lambda} \prod_{i \in I} \Hom(A_i, M)
\]
We check that $S'((A_i)_{i \in I})$ has the required property. Let $((a_i)_{i \in I}; M) \in S((A_i)_{i \in I})$. Then because $\M$ is $\lambda$-accessible, $M$ is a $\lambda$-directed colimit of $\lambda$-presentable objects $(M_j)_{j \in J}$. That is, objects in $\M_\lambda$. Since the inclusion functor $\M \hookrightarrow \C$ preserves directed colimits, we still have $M = \colim_{j \in J} M_j$ in $\C$. As $A_i$ is $\lambda$-presentable for each $i \in I$, we have that each $a_i$ factors through some $M_{j_i}$. Then since $\lambda > |I|$, there is $j \in J$ such that every $a_i$ factors through $M_j$. Write this factorisation as $A_i \xrightarrow{a_i'} M_j \xrightarrow{m_j} M$, where $m_j$ is the coprojection from the colimit. Then by construction $((a_i')_{i \in I}; M_j) \in S'((A_i)_{i \in I})$ and $\gtp((a_i)_{i \in I}; M) = \gtp((a_i')_{i \in I}; M_j)$.
\end{proof}
\section{Isi-sequences and isi-dividing}
\label{sec:isi-sequences-and-isi-dividing}
\begin{definition}
\thlabel{def:sequence}
A \emph{sequence} is a tuple $((a_i)_{i \in I}; M)$ where every $a_i$ has the same domain and $I$ is a linear order.
\end{definition}
We will often need to treat an initial segment of a sequence as one object. The following definition makes sense of this in a category-theoretic setting.
\begin{definition}
\thlabel{def:chain-of-initial-segments}
A chain $(M_i)_{i < \kappa}$ is called \emph{continuous} if for every limit $\ell < \kappa$ we have $M_\ell = \colim_{i < \ell} M_i$. A \emph{chain of initial segments} for a sequence $(a_i)_{i < \kappa}$ in some $M$ is a continuous chain $(M_i)_{i < \kappa}$ of models with chain bound $M$ (i.e. we have arrows $m_i: M_i \to M$ forming a cocone for $(M_i)_{i < \kappa}$). Such that for all $i <\kappa$ we have that $a_i$ factors through $M_{i+1}$.

If an arrow $c: C \to M$ factors through the chain $(M_i)_{i < \kappa}$, so $c$ factors as $C \to M_0 \to M$, then we say that \emph{$c$ embeds in $(M_i)_{i < \kappa}$}.
\end{definition}
\begin{convention}
\thlabel{conv:chain-of-initial-segments}
For a chain of initial segments $(M_i)_{i < \kappa}$ for some sequence $(a_i)_{i < \kappa}$ in $M$ we will abuse notation and view $a_i$ as an arrow into $M_j$ for $i < j$. Similarly, if $C$ embeds in $(M_i)_{i < \kappa}$, we view $c$ as an arrow into $M_i$ for all $i < \kappa$.
\end{convention}
\begin{definition}
\thlabel{def:isi-sequence}
We call a sequence $(a_i)_{i < \kappa}$ in $M$, together with a chain of initial segments $(M_i)_{i < \kappa}$, an \emph{isi-sequence} (short for \emph{initial segment invariant}) if for all $i \leq j < \kappa$ we have:
\[
\gtp(a_i, m_i; M) = \gtp(a_j, m_i; M).
\]
For $c: C \to M$ we say this is an \emph{isi-sequence over $c$} if $c$ embeds in $(M_i)_{i < \kappa}$.
\end{definition}
\begin{definition}
\thlabel{def:consistency}
Suppose we have $(a, b, c; M)$, a sequence $(b_i)_{i \in I}$ in $N$ and $(c; N)$. Then we say that $\gtp(a, b, c; M)$ is \emph{consistent} for $(b_i)_{i \in I}$ if there is an extension $N \to N'$ and an arrow $a': A \to N'$ such that
\[
\gtp(a, b, c; M) = \gtp(a', b_i, c; N')
\]
for all $i \in I$. We call $a'$ a \emph{realisation} of $\gtp(a, b, c; M)$ for $(b_i)_{i \in I}$.
\end{definition}
We overloaded the notation for the arrow $c$: it denotes both an arrow into $M$ and into $N$. We want to think of $c$ as some fixed set of parameters, and this notation supports that. The context should make clear which arrow is meant.
\begin{lemma}
\thlabel{lem:chain-consistency}
Suppose we have $(a, b, c; M)$, a sequence $(b_i)_{i < \kappa}$ in $N$ together with a chain of initial segments $(M_i)_{i < \kappa}$ and $(c; N)$ that embeds in $(M_i)_{i < \kappa}$. Then $\gtp(a, b, c; M)$ is consistent for $(b_i)_{i < \kappa}$ if and only if there is some chain bound $N'$ of $(M_i)_{i < \kappa}$ and an arrow $a': A \to N'$ such that for all $i < \kappa$:
\[
\gtp(a, b, c; M) = \gtp(a', b_i, c; N').
\]
\end{lemma}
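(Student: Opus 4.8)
The plan is to prove the two implications separately. First I would dispatch the forward direction (consistency yields the chain-bound formulation) as bookkeeping, and then spend the effort on the reverse direction, which I would handle by amalgamating $N$ against the new chain bound over the colimit of the chain $(M_i)_{i<\kappa}$. For the forward direction, suppose $\gtp(a,b,c;M)$ is consistent, witnessed by an extension $f\colon N \to N'$ and a realisation $a'\colon A \to N'$. I would observe that $N'$ is automatically a chain bound of $(M_i)_{i<\kappa}$: composing the cocone $m_i\colon M_i \to N$ with $f$ gives a cocone $fm_i\colon M_i \to N'$. Because $b_i$ factors through $M_{i+1}$ and $c$ embeds through $M_0$, the arrows $b_i$ and $c$ viewed into $N'$ through this chain-bound structure (per \thref{conv:chain-of-initial-segments}) are exactly their images under the extension $f$. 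So the required Galois-type equalities are literally the ones we already have, and the same $a'$ works.

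For the reverse direction, suppose we are given a chain bound $N'$, with cocone $n_i\colon M_i \to N'$, and a realisation $a'\colon A \to N'$ satisfying $\gtp(a,b,c;M) = \gtp(a',b_i,c;N')$ for all $i$. The central idea is to collapse the two competing cocones on $(M_i)_{i<\kappa}$ — the original one into $N$ and the new one into $N'$ — to a single span so that the amalgamation property applies. I would set $L = \colim_{i<\kappa} M_i$, which is a model since $\M$ has directed colimits (equivalently colimits of chains, \thref{rem:chains-vs-directed-colimits}) and the inclusion into $\C$ preserves them. Since both $N$ and $N'$ are chain bounds, the universal property of $L$ yields arrows $u\colon L \to N$ and $u'\colon L \to N'$ compatible with the coprojections. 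Applying AP to the span $N \xleftarrow{u} L \xrightarrow{u'} N'$ produces an amalgam $N \xrightarrow{p} N'' \xleftarrow{p'} N'$ with $pu = p'u'$, and $p$ exhibits the model $N''$ as an extension of $N$.

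To finish, I would put $a'' = p'a'$ and check that $N''$ together with $a''$ witnesses consistency. The one computation that matters is that $pm_j = p'n_j$ for every $j<\kappa$: this follows from $pu = p'u'$ together with $m_j = u\iota_j$ and $n_j = u'\iota_j$, where $\iota_j$ denotes the $j$-th coprojection into $L$. From this, the image of each $b_i$ (through $M_{i+1}$) and of $c$ (through $M_0$) into $N''$ via the extension $p$ coincides with its image via $p'$ from $N'$. Combining the given equality $\gtp(a,b,c;M) = \gtp(a',b_i,c;N')$ with invariance under the extension $p'$ (\thref{fact:galois-type-invariance-under-extension}) then gives $\gtp(a,b,c;M) = \gtp(a'',b_i,c;N'')$ for all $i$, which is exactly consistency. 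I expect the only real obstacle to be the amalgamation step: span-shaped AP cannot be applied to a whole cocone directly, so the essential manoeuvre is to route both cocones through the colimit $L$ and invoke AP a single time, after which everything reduces to chasing the one commuting square $pu = p'u'$.
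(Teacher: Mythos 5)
Your proposal is correct and follows essentially the same route as the paper: the forward direction is immediate, and the reverse direction forms the colimit $\colim_{i<\kappa} M_i$ (a model), uses its universal property to obtain the span into $N$ and $N'$, and amalgamates once over it, with the realisation transported along the resulting extension of $N'$. The extra diagram-chasing you supply ($pm_j = p'n_j$) is a correct elaboration of details the paper leaves implicit.
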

\begin{proof}
The left to right direction is direct: an extension $N \to N'$ together with a realisation $a'$ will be the required chain bound and arrow.

For the converse, let $N'$ and $a'$ be as in the statement. Define $M' = \colim_{i < \kappa} M_i$. Then we get extensions $N \leftarrow M' \to N'$, because $N$ and $N'$ are chain bounds of $(M_i)_{i < \kappa}$. Since $M'$ is a colimit of of models, it is a model itself and hence an amalgamation base. We thus find an amalgam $N \to N^* \leftarrow N'$. Then $N \to N^*$ together with $A \xrightarrow{a'} N' \to N^*$ are the required extension and realisation.
\end{proof}
We introduce the notion of isi-dividing, which is essentially extracted from the proof of the Kim-Pillay theorem. The goal was to get around the use of compactness and type locality (which are discussed in section \ref{sec:finitely-short-aecats}).
\begin{definition}
\thlabel{def:isi-dividing}
We say that $\gtp(a, b, c; M)$ \emph{isi-divides} over $c$ if there is $\mu$ such that for every $\lambda \geq \mu$ there is an isi-sequence $(b_i)_{i < \lambda}$ over $c$ in some $N$ with $\gtp(b_i, c; N) = \gtp(b, c; M)$ for all $i < \lambda$. Such that for some $\kappa < \lambda$ and every $I \subseteq \lambda$ with $|I| \geq \kappa$ we have that $\gtp(a, b, c; M)$ is inconsistent for $(b_i)_{i \in I}$.
\end{definition}
\begin{remark}
\thlabel{rem:isi-dividing-vs-dividing}
In the setting of first-order logic, positive logic \cite{pillay_forking_2000, ben-yaacov_simplicity_2003} and homogeneous model theory \cite{buechler_simple_2003} we have the usual notion of dividing, which is defined using indiscernible sequences.

It is not hard to see that dividing of some type $p$ implies isi-dividing of $p$ in those cases. All these settings have enough compactness to find arbitrarily long indiscernible sequences that witness dividing. Such an indiscernible sequence can be turned into an isi-sequence by inductively constructing models over which the tail of the sequence is indiscernible. So we find an isi-sequence where $p$ is inconsistent for every infinite subsequence, hence $p$ isi-divides.

In particular, if isi-dividing has local character, then dividing has local character. So local character of isi-dividing still implies that a first-order theory is simple, and also for positive theories, in the sense of \cite{ben-yaacov_simplicity_2003}.
\end{remark}
The converse of \thref{rem:isi-dividing-vs-dividing} is not clear to us. There are some partial answers in \thref{rem:isi-dividing-partial-answers}.
\begin{question}
\thlabel{q:isi-dividing-implies-dividing}
In settings where dividing has been defined in terms of indiscernible sequences (first-order, positive logic, homogeneous model theory), does isi-dividing imply dividing?
\end{question}
\begin{remark}
\thlabel{rem:isi-dividing-partial-answers}
If there exists a proper class of Ramsey cardinals then \thref{q:isi-dividing-implies-dividing} can be answered positively. In that case isi-dividing of some $\tp(a/Cb)$ is always witnessed by an isi-sequence of length some Ramsey cardinal $\lambda > |b| + |C|$. So the isi-sequence contains an indiscernible subsequence of length $\lambda$, which then witnesses dividing.

In the first-order and positive logic setting, dividing in a simple theory will give what we call a simple independence relation (see \thref{def:simple-and-stable-independence-relation} and \cite{kim_simple_1997, pillay_forking_2000, ben-yaacov_simplicity_2003}). Then by \thref{thm:kim-pillay-category-theoretic} this must coincide with the independence relation given by isi-dividing. So in simple theories dividing and isi-divding will coincide.
\end{remark}
Whenever we have some compactness, arguments using Ramsey cardinals can often be emulated by the usual Erd\H{o}s-Rado argument (see e.g.  \cite[Lemma 1.2]{ben-yaacov_simplicity_2003} or \cite[Lemma 1.4]{buechler_simple_2003}). So it seems reasonable to expect a positive answer to \thref{q:isi-dividing-implies-dividing}. For settings lacking compactness, the following is a natural question.
\begin{question}
\thlabel{q:isi-dividing-basic-properties}
What basic properties does isi-dividing generally satisfy? Can we prove more properties assuming certain combinatorial properties (e.g. local character), like we can for dividing?
\end{question}
\section{Independence relations}
\label{sec:independence-relations}
We will define an independence relation as a ternary relation on subobjects. The idea is similar to \cite{lieberman_forking_2019}. We compare the two further in \thref{rem:lrv-independence}.

We recall that the collection of subobjects $\Sub(X)$ forms a poset in any (well-powered) category, and if $A \leq B$ for $A, B \in \Sub(X)$, then we may also consider $A$ to be a subobject of $B$, that is $A \in \Sub(B)$. On the other hand, we always have $X \in \Sub(X)$ as the maximal element of this poset. So we will use the notation $A \leq X$ to mean that $A$ is a subobject of $X$.
\begin{convention}
\thlabel{conv:extending-subobjects}
We extend \thref{conv:extending-monomorphisms} to subobjects: given an extension $M \to N$ and a subobject $A \leq M$, we will view $A$ as a subobject of $N$.
\end{convention}
\begin{definition}
\thlabel{def:independence-relation}
In an AECat with AP, an \emph{independence relation} is a relation on triples of subobjects of models. If such a triple $(A, B, C)$ of a model $M$ is in the relation, we call it \emph{independent} and denote this by:
\[
A \indep[C]{M} B.
\]
This notation should be read as ``$A$ is independent from $B$ over $C$ (in $M$)''.
\end{definition}
\begin{definition}
\thlabel{def:independence-relation-properties}
An independence relation can have the following properties.
\begin{description}
\item[\textsc{Invariance}] If $A \indep[C]{M} B$ and $\gtp(A, B, C; M) = \gtp(A', B', C'; M')$ then we also have $A' \indep[C']{M'} B'$.

\item[\textsc{Monotonicity}] $A \indep[C]{M} B$ and $B' \leq B$ implies $A \indep[C]{M} B'$.

\item[\textsc{Base-Monotonicity}] $A \indep[C]{M} B$ and $C \leq C' \leq B$ implies $A \indep[C']{M} B$.

\item[\textsc{Transitivity}] $A \indep[B]{M} C$ and $A \indep[C]{M} D$ with $B \leq C$ implies that there is an extension $M \to N$ and $E \leq N$ such that $A \indep[B]{N} E$ and $C, D \leq E$.

\item[\textsc{Symmetry}] $A \indep[C]{M} B$ implies $B \indep[C]{M} A$.

\item[\textsc{Existence}] For $(a, c; M)$ and $B \leq M$ there is an extension $M \to M'$ with some $a': A \to M'$ such that $A' \indep[C]{M'} B$ and $\gtp(a', c; M') = \gtp(a, c; M)$.

\item[\textsc{Union}] Let $(B_i)_{i \in I}$ be a directed system with a cocone into some model $M$, and suppose $B = \colim_{i \in I} B_i$ exists. Then if $A \indep[C]{M} B_i$ for all $i \in I$, we have $A \indep[C]{M} B$.

\item[\textsc{Stationarity}] Let $A, B$ be objects, $M$ a model, and suppose we have corresponding arrows $a$, $b$ and $m$ into some $N$ and similar arrows $a'$, $b'$, $m'$ into some $N'$, such that $\gtp(a, m; N) = \gtp(a', m'; N')$ and $\gtp(b, m; N) = \gtp(b', m'; N')$. Then $A \indep[M]{N} B$ and $A' \indep[M']{N'} B'$ implies $\gtp(a, b, m; N) = \gtp(a', b', m'; N')$.
\end{description}
\end{definition}
\begin{remark}
\thlabel{rem:independence-relation-properties}
A few remarks about \thref{def:independence-relation-properties}.
\begin{enumerate}
\item We will mainly be interested in independence relations satisfying \textsc{Symmetry}. So, for example, we can apply \textsc{Monotonicity} to both sides. That is, if $A \indep[C]{M} B$ and $A' \leq A$, then $A' \indep[C]{M} B$. If the independence relation does not have \textsc{Symmetry}, one would have to distinguish between ``left'' and ``right'' versions (e.g. \textsc{Left-Monotonicity} and \textsc{Right-Monotonicity}).

\item If we have \textsc{Invariance}, \textsc{Monotonicity} and \textsc{Transitivity} then from $A \indep[B]{M} C$ and $A \indep[C]{M} D$ we can also conclude $A \indep[B]{M} D$. Most uses of \textsc{Transitivity} will actually be of this form, and we will just refer to it as ``by \textsc{Transitivity}''.

\item The usual extension property is implied by \textsc{Invariance}, \textsc{Monotonicity}, \textsc{Transitivity} and \textsc{Existence}: see \thref{{prop:basic-properties-independence-relation}}(iii).

\item The \textsc{Union} property is our version of what is usually known as ``finite character''. In a concrete setting it follows directly from finite character, but this formulation is more suited for our category-theoretic setting. In the setting of AECs one often sees the name ``$(<\aleph_0)$-witness property'', which implies \textsc{Union}.

\item In the statement of \textsc{Union}: we can view $B$ as a subobject of $M$ because the universal property of the colimit guarantees an arrow $B \to M$, which must be a monomorphism because all arrows are monomorphisms in an AECat. If every $B_i$ is a model, then the colimit $B$ always exists and is a model. Throughout the paper, we will only need to apply \textsc{Union} to directed systems of models.

\item \textsc{Stationarity} is sometimes also called ``uniqueness''.
\end{enumerate}
\end{remark}
There are two more key properties: \textsc{Local Character} and \textsc{3-amalgamation}. The first one is usually defined on finite objects, but these may not exist in our category. So, similar to \cite[Definition 8.6]{lieberman_forking_2019}, we have to build in some dependence on the size of the objects involved.
\begin{definition}
\thlabel{def:local-character}
An independence relation has \textsc{Local Character} if for every cardinal $\lambda$, there is a cardinal $\Upsilon(\lambda)$, such that the following holds. Given a model $M$ with subobjects $A$ and $B$, where $A$ is $\lambda$-presentable, there is an $\Upsilon(\lambda)$-presentable $B' \leq B$ such that $A \indep[B']{M} B$. We call the class function $\Upsilon$ a \emph{local character function}.
\end{definition}
Note that we do not consider a local character function to be part of the data for an independence relation. Being a local character function is just saying that it witnesses the \textsc{Local Character} property.
\begin{convention}
\thlabel{conv:local-character-function-on-object}
For a local character function $\Upsilon$ and an object $A$ we will write $\Upsilon(A)$ for $\Upsilon(\lambda)$ where $\lambda$ is the least cardinal such that $A$ is $\lambda$-presentable.
\end{convention}
\begin{definition}
\thlabel{def:3-amalgamation}
An independence relation has \textsc{3-amalgamation} if the following holds. Suppose that we have
\[
A \indep[M]{N_1} B, \quad
B \indep[M]{N_2} C, \quad
C \indep[M]{N_3} A,
\]
so $A$ is the domain of a subobject of $N_1$ and $N_3$, and similar for $B$ and $C$, while $M$ is considered a subobject of all three. Suppose furthermore that $M$ is a model and that
\begin{align*}
\gtp(a, m; N_1) &= \gtp(a, m; N_3), \\
\gtp(b, m; N_1) &= \gtp(b, m; N_2), \\
\gtp(c, m; N_2) &= \gtp(c, m; N_3),
\end{align*}
where $a$, $b$, $c$ and $m$ are representatives for the subobjects $A$, $B$, $C$ and $M$ respectively (overloading notation for subobjects of different models). Then we can find extensions from $N_1$, $N_2$ and $N_3$ to some $N$ such that the diagram we obtain in that way commutes:
\[
\begin{tikzcd}
 & N_1 \arrow[rrr, dashed] &  &  & N \\
A \arrow[rrr] \arrow[ru] &  &  & N_3 \arrow[ru, dashed] &  \\
 & B \arrow[uu] \arrow[rrr] &  &  & N_2 \arrow[uu, dashed] \\
M \arrow[rrruu] \arrow[rrrru] \arrow[ruuu] &  &  & C \arrow[ru] \arrow[uu] & 
\end{tikzcd}
\]
Furthermore, these extensions are such that $A \indep[M]{N} N_2$.
\end{definition}
\begin{definition}
\thlabel{def:simple-and-stable-independence-relation}
Let $\forkindep$ be an independence relation that satisfies
\textsc{Invariance},
\textsc{Monotonicity},
\textsc{Base-Monotonicity},
\textsc{Transitivity},
\textsc{Symmetry},
\textsc{Existence},
\textsc{Union} and
\textsc{Local Character}.
If $\forkindep$ also satisfies \textsc{Stationarity}, then we call $\forkindep$ a \emph{stable independence relation}. If instead $\forkindep$ also satisfies \textsc{3-amalgamation}, then we call $\forkindep$ a \emph{simple independence relation}.
\end{definition}
\begin{remark}
\thlabel{rem:lrv-independence}
As opposed to \cite{lieberman_forking_2019} we have defined an independence relation here on triples of subobjects, while they define it as a relation on commuting squares. Their notion has the advantage of the independent squares forming an accessible category, and allowing for a more category-theoretic study of the independence relation itself (see also \cite{lieberman_cellular_2020}). Our approach has the benefit that the calculus we get from it is more intuitive and easier to work with.

In an AECat of the form $(\C, \C)$, these two notions are essentially the same. That is, assuming basic properties on the relevant independence relations, one can be recovered from the other and vice versa.
\end{remark}
\begin{example}
\thlabel{ex:non-finitely-short-union}
A natural question would be to ask whether there are examples that are not finitely short, but where there still is an independence relation satisfying \textsc{Union}. Quasiminimal excellent classes as discussed in \thref{ex:quasiminimal-excellent-classes} are such an example: the pregeometry there yields a stable independence relation.

Other examples can be found in AECs with intersections. For example \cite[Appendix C]{vasey_shelahs_2017} discusses how to find a stable independence relation in such AECs. Also \cite[Section 8.2]{grossberg_simple-like_2020} comes close to giving a simple example, rather than stable. Although no explicit examples are given and they do not get the full $(<\aleph_0)$-witness property.
\end{example}
\begin{proposition}
\thlabel{prop:basic-properties-independence-relation}
Let $\forkindep$ be an independence relation satisfying \textsc{Invariance}, \textsc{Monotonicity}, \textsc{Existence} and \textsc{Transitivity}, then the following hold:
\begin{enumerate}[label=(\roman*)]
\item for any $A, B \leq M$, we have $A \indep[B]{M} B$;
\item if we have $A \indep[C]{M} B$, then we can find an extension $M \to N$ and $D \leq N$ such that $B, C \leq D$ and $A \indep[C]{N} D$;
\item for $(a, b, c; M)$ such that $A \indep[C]{M} B$ and any $B' \leq M$ there is an extension $M \to N$ with some $a': A \to N$ such that $A' \indep[C]{N} B'$ and $\gtp(a', b, c; N) = \gtp(a, b, c; M)$.
\end{enumerate}
\end{proposition}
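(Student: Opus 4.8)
For (i), the strategy is to use \textsc{Existence} together with the observation that $B \indep[B]{M} B$ should hold by the way subobjects relate to themselves. More carefully, I would start from a trivial type equality and apply \textsc{Existence} to produce an independent copy, then transport back via \textsc{Invariance}. Concretely, given $A, B \leq M$, apply \textsc{Existence} with parameters $(a, b; M)$ (taking $C = B$) to obtain an extension $M \to M'$ and an arrow $a': A \to M'$ with $A' \indep[B]{M'} B$ and $\gtp(a', b; M') = \gtp(a, b; M)$. Since the types of $(A, B, B)$ agree on both sides, \textsc{Invariance} then transfers $A' \indep[B]{M'} B$ back to $A \indep[B]{M} B$. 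The point is that the base $C = B$ coincides with the right-hand side, so \textsc{Existence} gives the conclusion essentially for free.

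For (ii), the plan is to combine (i) with \textsc{Transitivity}. Assume $A \indep[C]{M} B$. By part (i) applied inside $M$ we have $A \indep[B]{M} B$ (with base $B$). Now I want to feed $A \indep[C]{M} B$ and something of the form $A \indep[B]{M} (\text{larger object})$ into \textsc{Transitivity}, whose statement produces an extension $M \to N$ and $E \leq N$ with $C, D \leq E$ and the independence $A \indep[C]{N} E$. The correct instantiation is to apply \textsc{Transitivity} to the pair $A \indep[C]{M} B$ and $A \indep[B]{M} B$: the hypothesis $C \leq B$ is exactly the base inclusion required, and the conclusion yields $M \to N$ and $D := E \leq N$ with $B, C \leq D$ and $A \indep[C]{N} D$, which is what we want. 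A small check is that the two objects named $B$ in the two independence statements are the same subobject, so the shapes match.

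For (iii), the approach is to realise the desired type over a new configuration using \textsc{Existence} and then enlarge via \textsc{Transitivity}. Given $(a, b, c; M)$ with $A \indep[C]{M} B$ and an arbitrary $B' \leq M$, first apply \textsc{Existence} to the data $(a, c; M)$ and the subobject $B'$ to get an extension $M \to M''$ and $a'': A \to M''$ with $A'' \indep[C]{M''} B'$ and $\gtp(a'', c; M'') = \gtp(a, c; M)$. The main obstacle, and the step I expect to require the most care, is reconciling the type information: \textsc{Existence} only controls $\gtp(a'', c; M'')$, whereas the conclusion demands the full $\gtp(a', b, c; N) = \gtp(a, b, c; M)$, i.e. it must also remember $b$. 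The idea is to use \thref{prop:galois-types-basic-observations}(iii) to transport $b$ into an extension of $M''$ along the type equality, producing $b''$ with the correct joint type with $a''$ and $c$, and then argue via \textsc{Invariance} that independence is preserved after this transport; an application of \textsc{Transitivity} or \textsc{Monotonicity} may be needed to absorb $B'$ into the base-and-right-hand-side bookkeeping so that the final $A' \indep[C]{N} B'$ and the full type equality hold simultaneously. The delicate point throughout is keeping the representatives $a, b, c$ and their primed versions coherent as subobjects versus arrows, exactly the subtlety flagged in \thref{ex:galois-types-on-subobjects}.
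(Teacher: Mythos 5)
Part (i) is correct and is exactly the paper's argument. The problems are in (ii) and (iii).

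In (ii) your instantiation of \textsc{Transitivity} requires the inclusion $C \leq B$, which you describe as ``exactly the base inclusion required'' --- but it is not a hypothesis of the proposition. You take the pair $A \indep[C]{M} B$ and $A \indep[B]{M} B$, so in the notation of \textsc{Transitivity} the condition ``$B \leq C$'' becomes $C \leq B$, and moreover the conclusion would only give $B \leq E$, not $C \leq E$. The statement is only interesting precisely when $C \not\leq B$ (otherwise $D = B$ works trivially), so your proof fails in the relevant case. The fix is to apply (i) with base $C$ rather than base $B$: from $A \indep[C]{M} C$ and $A \indep[C]{M} B$ the required inclusion is $C \leq C$, and \textsc{Transitivity} yields $E \geq C, B$ with $A \indep[C]{N} E$, which is the paper's argument.

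In (iii) your sketch never uses the hypothesis $A \indep[C]{M} B$, which should be a warning sign, and the object it produces is not the one the statement asks for. Applying \textsc{Existence} to $(a,c;M)$ and $B'$ only controls $\gtp(a'',c)$; transporting $b$ via \thref{prop:galois-types-basic-observations}(iii) then produces a \emph{new} arrow $b'': B \to N$ with $\gtp(a'',b'',c;N) = \gtp(a,b,c;M)$, whereas the conclusion requires the type equality with the \emph{original} $b$ composed with the extension $M \to N$ (per \thref{conv:extending-monomorphisms}); there is no reason $b''$ should equal that composite, and the downstream applications in \thref{lem:build-independent-isi-sequence} and the main theorem genuinely need the original $b$ to stay fixed. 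The paper's route is different: first use (ii) to enlarge $B$ to $D \geq B, C$ with $A \indep[C]{N'} D$ (this is where the hypothesis enters), then apply \textsc{Existence} over the base $D$ to get $a'$ with $A' \indep[D]{N} B'$ and $\gtp(a',d;N) = \gtp(a,d;N')$; since $B, C \leq D$ this automatically preserves $\gtp(a',b,c)$ with the original $b$ and $c$, and \textsc{Invariance} plus \textsc{Transitivity} then give $A' \indep[C]{N} B'$.
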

\begin{proof}
For (i), we use \textsc{Existence} to get an extension $M \to N$ and $A' \leq N$ such that $A' \indep[B]{N} B$ and $\gtp(A, B; M) = \gtp(A', B; N)$. Then \textsc{Invariance} yields the desired result.

For (ii), we use (i) to get $A \indep[C]{M} C$. Since we have by assumption that $A \indep[C]{M} B$, the result now directly follows from applying \textsc{Transitivity}.

Finally, for (iii) we use (ii) to get $M \to N'$ and $B,C \leq D \leq N'$ and $A \indep[C]{N'} D$. Then we use \textsc{Existence} to find $N' \to N$ and $a': A \to N$ such that $A' \indep[D]{N} B'$ and $\gtp(a', d; N) = \gtp(a, d; N')$. By \textsc{Invariance} then $A' \indep[C]{N'} D$ and thus $A' \indep[C]{N} B'$ by \textsc{Transitivity}. Also $\gtp(a', b, c; N) = \gtp(a, b, c; M)$, because $B, C \leq D$.
\end{proof}
\begin{proposition}
\thlabel{prop:local-character-sequence}
Let $\forkindep$ be an independence relation satisfying \textsc{Local Character} and \textsc{Base-Monotonicity}, with local character function $\Upsilon$. Let $(M_i)_{i < \kappa}$ be a chain of models, with chain bound $N$, and write $M_\kappa = \colim_{i < \kappa} M_i$. If $A$ is a subobject of $N$ such that $\kappa \geq \Upsilon(A)$, then there is some $i_0 < \kappa$ such that $A \indep[M_{i_0}]{N} M_\kappa$.
\end{proposition}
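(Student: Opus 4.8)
The plan is to produce a small base for the independence of $A$ from $M_\kappa$ via \textsc{Local Character}, to absorb that base into a single stage of the chain using presentability, and then to conclude with \textsc{Base-Monotonicity}.

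First I would record that $N$ is a model and that the coprojections realise each $M_i$, as well as the colimit $M_\kappa = \colim_{i<\kappa} M_i$, as subobjects of $N$ (the comparison arrow $M_\kappa \to N$ coming from the universal property, and all such arrows being monomorphisms in an AECat). Regarding $A$ and $M_\kappa$ as subobjects of the model $N$ and applying \textsc{Local Character}, I obtain a $\Upsilon(A)$-presentable subobject $B' \leq M_\kappa$ with $A \indep[B']{N} M_\kappa$.

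The heart of the argument is to place $B'$ inside a single $M_{i_0}$. Although a chain is in general only directed in the weak (finite) sense, here $\kappa$ is a \emph{regular} cardinal with $\kappa \geq \Upsilon(A)$, so any subset of $\kappa$ of size $< \Upsilon(A) \leq \kappa$ has size $< \kappa$ and is therefore bounded below $\kappa$; hence the diagram $(M_i)_{i<\kappa}$ is $\Upsilon(A)$-directed and $M_\kappa$ is a genuine $\Upsilon(A)$-directed colimit. Since $\Hom(B', -)$ preserves $\Upsilon(A)$-directed colimits, the monomorphism $B' \to M_\kappa$ factors through some coprojection $M_{i_0} \to M_\kappa$, and, as all arrows are monomorphisms, this exhibits $B' \leq M_{i_0} \leq M_\kappa$. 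I expect this factoring to be the only genuine step, its subtlety lying in deducing $\Upsilon(A)$-directedness from the regularity of $\kappa$ rather than from directedness of the chain as such.

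Finally, \textsc{Base-Monotonicity} applied to $A \indep[B']{N} M_\kappa$ along the inclusions $B' \leq M_{i_0} \leq M_\kappa$ yields $A \indep[M_{i_0}]{N} M_\kappa$, which is the desired conclusion.
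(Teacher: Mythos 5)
Your proof is correct and follows essentially the same route as the paper's: apply \textsc{Local Character} to get a small base $B' \leq M_\kappa$, use presentability of $B'$ together with the (sufficient) directedness of the chain of length the regular cardinal $\kappa$ to factor $B'$ through some $M_{i_0}$, and finish with \textsc{Base-Monotonicity}. The only cosmetic difference is that the paper first weakens ``$\Upsilon(A)$-presentable'' to ``$\kappa$-presentable'' and uses $\kappa$-directedness, whereas you argue directly that the chain is $\Upsilon(A)$-directed; these are interchangeable.
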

\begin{proof}
There is $\kappa$-presentable $M' \leq M_\kappa$ such that $A \indep[M']{N} M_\kappa$, by \textsc{Local Character}. Since $M_\kappa$ is the colimit of a $\kappa$-directed system, $M' \leq M_\kappa$ must factor as $M' \leq M_{i_0} \leq M_\kappa$ for some $i_0 < \kappa$. By \textsc{Base-Monotonicity} then $A \indep[M_{i_0}]{N} M_\kappa$.
\end{proof}
\begin{definition}
\thlabel{def:independent-sequence}
Suppose we have an independence relation $\forkindep$. Let $(a_i)_{i < \kappa}$ be a sequence in $M$ and let $c: C \to M$ be an arrow. Let $(M_i)_{i < \kappa}$ be a chain of initial segments for $(a_i)_{i < \kappa}$. Then we say that the $(M_i)_{i < \kappa}$ are \emph{witnesses of independence} for $(a_i)_{i < \kappa}$, if for all $i \in I$ we have
\[
A_i \indep[C]{M} M_i.
\]
Here $A_i$ is the subobject represented by $a_i: A_i \to M$, and likewise for $M_i \to M$.

We say that the sequence $(a_i)_{i < \kappa}$  is \emph{$\forkindep[C]$-independent} if it admits a chain of witnesses of independence.
\end{definition}
\begin{lemma}
\thlabel{lem:build-independent-isi-sequence}
Suppose that $\forkindep$ is an independence relation satisfying \textsc{Invariance}, \textsc{Monotonicity}, \textsc{Transitivity}, \textsc{Existence} and \textsc{Union}. Then given $(a, c; M)$ and any $\kappa$, there is a $\forkindep[C]$-independent isi-sequence $(a_i)_{i < \kappa}$ over $c$ in some extension $M \to N$ such that $\gtp(a_i, c; N) = \gtp(a, c; M)$ for all $i < \kappa$.
\end{lemma}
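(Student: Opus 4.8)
The plan is to build the sequence together with its chain of initial segments by transfinite recursion on $i<\kappa$, adjoining one element $a_i$ and one new model $M_{i+1}$ at each successor and taking colimits at limits. Concretely, I construct a continuous chain of models $(M_i)_{i\le\kappa}$ with $M_0=M$ (so that $c\colon C\to M_0$ gives the embedding of $c$ into the chain) together with arrows $a_i\colon A\to M_{i+1}$, maintaining the invariant that $A_i \indep[C]{M_{i+1}} M_i$, that $\gtp(a_i,m_j;\cdot)=\gtp(a_j,m_j;\cdot)$ for all $j\le i$, and that $\gtp(a_i,c;\cdot)=\gtp(a,c;M)$. By \textsc{Invariance} these persist into the chain bound $N:=M_\kappa=\colim_{i<\kappa}M_i$, which is a model since $\M$ has directed colimits. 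Then $N$ is the required extension, $(M_i)_{i<\kappa}$ is the chain of initial segments, the coherence clause is exactly the isi-condition of \thref{def:isi-sequence} and the witnesses-of-independence condition of \thref{def:independent-sequence}, and the last clause gives $\gtp(a_i,c;N)=\gtp(a,c;M)$.

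At stage $0$ I apply \textsc{Existence} to $(a,c;M_0)$ with $B=M_0$ to get $M_0\to M_1$ and $a_0$ with $A_0\indep[C]{M_1}M_0$ and $\gtp(a_0,c;M_1)=\gtp(a,c;M)$. At a successor $i=i'+1$ the key observation is that, by the coherence clause at $i'$, matching $a_i$ against all the earlier $a_j$ over their $M_j$ reduces to matching it against $a_{i'}$ over $M_{i'}$. So I use the extension property, available here as \thref{prop:basic-properties-independence-relation}(iii), applied to $(a_{i'},m_{i'},c)$ (for which $A_{i'}\indep[C]{M_i}M_{i'}$) with $B'=M_{i'+1}=M_i$. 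This produces $M_{i+1}$ and $a_i$ with $A_i\indep[C]{M_{i+1}}M_i$ and $\gtp(a_i,m_{i'},c;\cdot)=\gtp(a_{i'},m_{i'},c;\cdot)$; restricting this equality along the inclusions $M_j\le M_{i'}$ via \thref{prop:galois-types-basic-observations} propagates the match to every $j\le i'$, and restricting to $c$ preserves the type over $c$, so the full invariant is restored.

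The limit stage $\ell$ is where I expect the real work to be. Having formed $M_\ell=\colim_{i<\ell}M_i$, I must build a single arrow $a_\ell$ realising, over each piece $M_i$ ($i<\ell$), the type $\gtp(a_i,m_i;\cdot)$, lying in some extension $M_{\ell+1}$ of $M_\ell$. Granting such an $a_\ell$, the remainder is easy: the coherence clause holds by construction; since $a_\ell$ and $a_i$ then have the same Galois type over $M_i$, \textsc{Invariance} yields $A_\ell\indep[C]{M_{\ell+1}}M_i$ from $A_i\indep[C]{M_{i+1}}M_i$ for every $i<\ell$, and \textsc{Union} on the directed system $(M_i)_{i<\ell}$ with colimit $M_\ell$ upgrades this to $A_\ell\indep[C]{M_{\ell+1}}M_\ell$; the type over $c$ comes from the match over $M_0\ni c$ together with \thref{fact:galois-type-invariance-under-extension}.

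The hard part will be producing $a_\ell$ itself. The naive attempt — take a fresh \textsc{Existence} copy of $\gtp(a,c;M)$ independent from $M_\ell$ — fails, since without \textsc{Stationarity} such a copy need not match the earlier $a_i$ over $M_i$. The difficulty is sharp because the pieces $M_i$ include smaller limit models, and without finite shortness a Galois type over such a colimit is not determined by its restrictions, so matching over each $M_i$ separately is strictly more than matching the compatible restrictions. My plan is therefore to realise the coherent family by threading a single element through the construction: alongside the main recursion I build an auxiliary continuous chain of models with compatible embeddings of the $M_i$ carrying one element $\alpha$, using at each successor the equality $\gtp(a_{i+1},m_i;\cdot)=\gtp(a_i,m_i;\cdot)$ to amalgamate over the model $M_i$ (an amalgamation base) and let $\alpha$ follow the type from $M_i$ to $M_{i+1}$; the colimit then places $\alpha$ in an extension of $M_\ell$ realising $\gtp(a_i,m_i;\cdot)$ over every $M_i$, and I set $a_\ell:=\alpha$. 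Making this precise — in particular arranging the auxiliary amalgamations coherently so that the threads feeding different limit stages agree on their common parts, which is exactly what prevents the same colimit-type difficulty from recurring at the auxiliary limits — is the technical heart of the argument, while the successor steps and the final assembly are routine applications of the extension property, \textsc{Invariance} and \textsc{Union}.
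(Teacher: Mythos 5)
Your successor step is fine, but the proof is incomplete exactly where you say it is: at limit stages you never actually produce $a_\ell$, and the ``auxiliary chain'' you propose to thread a single element through is left as a plan rather than an argument. Since you correctly observe that without finite shortness a coherent family of Galois types over the $M_i$ ($i<\ell$) need not be realised over $M_\ell=\colim_{i<\ell}M_i$, the burden of the proof lies precisely in the part you have not written; as it stands this is a genuine gap.

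The paper's proof shows that the difficulty disappears if you reorganise the recursion so that the threaded element is the primary object and the $a_i$ are derived from it. One builds two chains: the initial segments $(M_i)_{i<\kappa}$ and an ambient chain of models $(N_i)_{i<\kappa}$ with natural extensions $M_i\to N_i$, starting from an extension $M\to N_0$, and one carries the \emph{original} arrow $a\colon A\to N_0\to N_i$ along the ambient chain. The invariant is only that this master copy satisfies $A\indep[C]{N_i}M_i$, together with the one-step condition $\gtp(a_j,m_j;N_{j+1})=\gtp(a,m_j;N_{j+1})$ at each successor. At a successor one applies \thref{prop:basic-properties-independence-relation}(iii) to enlarge $M_i$ to an $M_{i+1}$ that is a conjugate copy of $N_i$ over $M_i$ while keeping $a$ independent, and defines $a_i$ as the corresponding copy of $a$ inside $M_{i+1}$; the isi-condition $\gtp(a_i,m_j;\cdot)=\gtp(a_j,m_j;\cdot)$ for $j\le i$ then follows by comparing each side to $\gtp(a,m_j;\cdot)$ and restricting along $M_j\le M_i$. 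At a limit one simply takes colimits of both chains and applies \textsc{Union} to recover the independence of the master copy from $M_\ell$; no Galois type over a colimit ever has to be realised, because each $a_i$ is only ever matched against the master copy over the single model $M_i$. So your instinct (thread one element through) is the right one, and your worry that the difficulty recurs at auxiliary limits is unfounded once the construction is set up this way --- but your write-up stops short of the construction that actually proves the lemma.
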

\begin{proof}
We inductively build a chains of models $(M_i)_{i < \kappa}$ and $(N_i)_{i < \kappa}$ together with arrows $a_i: A \to M_{i+1}$, where $N_0$ is an extension of $M$ and $C$ embeds in $(M_i)_{i < \kappa}$, such that:
\begin{enumerate}[label=(\roman*)]
\item there is an extension $M_i \to N_i$, and this is natural in the sense that
\[
\begin{tikzcd}
N_j \arrow[r] & N_i \\
M_j \arrow[r] \arrow[u] & M_i \arrow[u]
\end{tikzcd}
\]
commutes for all $j < i$;
\item $A \indep[C]{N_i} M_i$;
\item for successor $i = j+1$, we have $\gtp(a_j, m_j; N_i) = \gtp(a, m_j; N_i)$.
\end{enumerate}
\underline{\emph{Base case.}} Apply \textsc{Existence} to find $M \to N_0$ and $M_0 \leq N_0$ with $\gtp(m_0, c; N_0) = \gtp(m, c; M)$ and $A \indep[C]{N_0} M_0$.

\vspace{\baselineskip}\noindent
\underline{\emph{Successor step.}} We use the induction hypothesis to apply \thref{prop:basic-properties-independence-relation}(iii) to find and extension $N_i \to N_{i+1}$ and $M_{i+1} \leq N_{i+1}$ such that $A \indep[C]{N_{i+1}} M_{i+1}$ and $\gtp(m_{i+1}, m_i; N_{i+1}) = \gtp(Id_{N_i}, m_i; N_i)$. Properties (i) and (ii) follow directly. We had an arrow $a: A \to N_i$, and $M_{i+1}$ is the same object as $N_i$. So we have an arrow $a_{i+1}: A \to M_{i+1}$. Restricting the equality $\gtp(m_{i+1}, m_i; N_{i+1}) = \gtp(n_i, m_i; N_{i+1})$ then shows that property (iii) holds.

\vspace{\baselineskip}\noindent
\underline{\emph{Limit step.}} We let $M_i = \colim_{j < i} M_i$ and $N_i = \colim_{j < i} N_i$. For every $j < i$ we can compose $M_j \to N_j$ with the coprojection $N_j \to N_i$. By property (i) this makes $N_i$ into a cocone for $(M_j)_{j < i}$. So the universal property gives us an arrow $M_i \to N_i$, clearly satisfying property (i). Property (ii) follows from \textsc{Union}.

\vspace{\baselineskip}\noindent We set $N = \colim_{i < \kappa} N_i$. Then property (iii) ensures that $(a_i)_{i < \kappa}$ with chain of initial segments $(M_i)_{i < \kappa}$ is an isi-sequence. Since $C$ embeds in $(M_i)_{i < \kappa}$, we also see that $\gtp(a_i, c; N) = \gtp(a, c; M)$ for all $i < \kappa$. Finally, the $(M_i)_{i < \kappa}$ are witnesses of independence, which follows from combining properties (ii) and (iii).
\end{proof}
We close out this section with the following corollary, which immediately follows from \thref{prop:uniqueness-implies-3-amalgamation}.
\begin{corollary}
\thlabel{cor:stable-independence-relation-is-simple}
Every stable independence relation is also simple.
\end{corollary}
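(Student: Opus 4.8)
Since a stable and a simple independence relation are required (by \thref{def:simple-and-stable-independence-relation}) to satisfy exactly the same list of properties, except that the former demands \textsc{Stationarity} and the latter \textsc{3-amalgamation}, it suffices to show that a relation $\forkindep$ satisfying all those common properties together with \textsc{Stationarity} also satisfies \textsc{3-amalgamation}. So I would fix a configuration as in \thref{def:3-amalgamation}: independences $A \indep[M]{N_1} B$, $B \indep[M]{N_2} C$ and $C \indep[M]{N_3} A$ with $M$ a model, together with the three pairwise type compatibilities, and aim to produce the amalgam $N$ with $A \indep[M]{N} N_2$.

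The plan is to first manufacture a single, fully independent realisation of the whole configuration over the model $N_2$, which already carries the $B$--$C$ face since $B \indep[M]{N_2} C$. Using \thref{prop:galois-types-basic-observations}(iii) I would transport the type $\gtp(a, m; N_1) = \gtp(a, m; N_3)$ to a realisation $a'' \colon A \to N_2''$ in some extension $N_2 \to N_2''$, and then apply \textsc{Existence} with base model $N_2''$ and the subobject $N_2 \leq N_2''$ to obtain an extension $N_2'' \to N_*$ and $a_* \colon A \to N_*$ with $A_* \indep[M]{N_*} N_2$ and $\gtp(a_*, m; N_*) = \gtp(a, m; N_1)$. By \textsc{Monotonicity} then $A_* \indep[M]{N_*} B$ and $A_* \indep[M]{N_*} C$, while the whole of $N_2$, and hence both $B$ and $C$, sits inside $N_*$.

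Next I would pin down the two remaining faces by \textsc{Stationarity}. For the $A$--$B$ face, $A_* \indep[M]{N_*} B$ and $A \indep[M]{N_1} B$ together with the matching types $\gtp(a_*, m; N_*) = \gtp(a, m; N_1)$ and $\gtp(b, m; N_*) = \gtp(b, m; N_1)$ (the latter from the compatibility $\gtp(b,m;N_1)=\gtp(b,m;N_2)$ and \thref{fact:galois-type-invariance-under-extension} along $N_2 \to N_*$) give $\gtp(a_*, b, m; N_*) = \gtp(a, b, m; N_1)$. Symmetrically, rewriting $C \indep[M]{N_3} A$ as $A \indep[M]{N_3} C$ by \textsc{Symmetry}, I get $\gtp(a_*, c, m; N_*) = \gtp(a, c, m; N_3)$. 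These two Galois-type equalities furnish common extensions $N_* \to P_1 \leftarrow N_1$ and $N_* \to P_3 \leftarrow N_3$ identifying $a_*$ with $a$ (and $b, c, m$ with their counterparts); amalgamating $P_1$ and $P_3$ over the model $N_*$ via AP yields $N$ with compatible embeddings of $N_1$, $N_3$ and $N_*$, and of $N_2$ through $N_2 \leq N_* \to N$. Finally $A \indep[M]{N} N_2$ follows from $A_* \indep[M]{N_*} N_2$ by \textsc{Invariance} along $N_* \to N$, since $a_*$ and $a$ name the same subobject of $N$.

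The routine parts are the type bookkeeping and the applications of \textsc{Existence} and \textsc{Monotonicity}. The step I expect to be the main obstacle is the gluing at the end: ensuring that the two pairwise amalgams combine into one $N$ with a genuinely commuting cube, and that all four subobjects $A$, $B$, $C$, $M$ are identified correctly. Keeping everything over the common model $N_*$, so that every embedding factors through $N_* \to N$, is what forces the identifications and makes the cube commute; and the small but essential fact that independence is preserved when the ambient model is enlarged, a direct consequence of \textsc{Invariance}, underpins both the \textsc{Stationarity} applications and the final conclusion.
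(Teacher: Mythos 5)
Your proposal is correct and takes essentially the same route as the paper: the paper derives this corollary immediately from its Proposition \thref{prop:uniqueness-implies-3-amalgamation}, which states that \textsc{Invariance}, \textsc{Transitivity}, \textsc{Existence}, \textsc{Symmetry} and \textsc{Stationarity} imply \textsc{3-amalgamation}, and its proof likewise manufactures a copy of $A$ independent from $N_2$ over $M$ and uses \textsc{Stationarity} to identify it with the given configurations before amalgamating. The only difference is bookkeeping: you apply \textsc{Stationarity} twice (against $N_1$ over $B$ and against $N_3$ over $C$) and then glue over $N_*$, whereas the paper first amalgamates $N_1$ and $N_3$ with $N_2$ and applies \textsc{Stationarity} once to the type of $A$ over all of $N_2$.
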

\begin{proposition}
\thlabel{prop:uniqueness-implies-3-amalgamation}
If an independence relation $\forkindep$ satisfies \textsc{Invariance}, \textsc{Transitivity}, \textsc{Existence}, \textsc{Symmetry} and \textsc{Stationarity}, then it also satisfies \textsc{3-amalgamation}.
\end{proposition}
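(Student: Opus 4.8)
The plan is to exploit the fact that \textsc{Stationarity} makes an independent amalgam over a model unique: once the two marginal Galois types over $M$ are fixed, an independent configuration over $M$ admits only one Galois type. I will build one concrete independent amalgam of the three corners over $N_2$ and then use this uniqueness to check that its pairwise restrictions are exactly the Galois types prescribed by $N_1$ and $N_3$, which is precisely what is needed to glue everything into the commuting diagram.

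First I would record the marginals. The hypotheses say the three ``corner'' types $p_A = \gtp(a,m;N_1) = \gtp(a,m;N_3)$, $p_B = \gtp(b,m;N_1)=\gtp(b,m;N_2)$ and $p_C = \gtp(c,m;N_2)=\gtp(c,m;N_3)$ are well defined over $M$, and by \textsc{Symmetry} the three independence hypotheses become $A \indep[M]{N_1} B$, $B \indep[M]{N_2} C$ and $A \indep[M]{N_3} C$. Next, working over $N_2$ (which already carries $B$ and $C$), I would realise $p_A$ in an extension of $N_2$: amalgamating $N_1$ and $N_2$ over the common model $M$ via AP and pushing $a$ forward realises $p_A$ there, by \thref{fact:galois-type-invariance-under-extension}. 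Applying \textsc{Existence} then produces an extension $N_2 \to N$ and an arrow $a^\ast : A \to N$ with $A^\ast \indep[M]{N} N_2$ and $\gtp(a^\ast, m^\ast; N) = p_A$, where $m^\ast$ is the image of $M$. Since $N$ is a model it is an amalgamation base, and the relation $A^\ast \indep[M]{N} N_2$ is already the ``furthermore'' clause of \textsc{3-amalgamation}.

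The heart of the argument is to verify the two pairwise equalities $\gtp(a^\ast, b^\ast, m^\ast; N) = \gtp(a, b, m; N_1)$ and $\gtp(a^\ast, c^\ast, m^\ast; N) = \gtp(a, c, m; N_3)$, where $b^\ast, c^\ast$ are the images of $b, c$ under $N_2 \to N$; the third equality, over $(b,c,m)$, is automatic because $N_2 \to N$ is an extension (\thref{fact:galois-type-invariance-under-extension}). For the first, in both configurations $A$ is independent from $B$ over the model $M$ --- on the $N$-side by restricting $A^\ast \indep[M]{N} N_2$ to $B^\ast$, on the $N_1$-side by hypothesis --- while the marginals agree: $\gtp(a^\ast, m^\ast; N) = p_A = \gtp(a, m; N_1)$ and $\gtp(b^\ast, m^\ast; N) = p_B = \gtp(b, m; N_1)$. \textsc{Stationarity} then forces $\gtp(a^\ast, b^\ast, m^\ast; N) = \gtp(a, b, m; N_1)$, and symmetrically (using $A \indep[M]{N_3} C$) the second equality. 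Finally I would glue: the equality over $(a,c,m)$ gives a common extension of $N$ and $N_3$, the equality over $(a,b,m)$ a common extension of $N$ and $N_1$, and since $N$ is a model these are amalgamated over $N$ into a single model into which $N_1, N_2, N_3$ all embed compatibly, identifying the shared copies of $A, B, C$ and $M$; this is exactly the commuting diagram, and $A \indep[M]{N} N_2$ survives the final extension by \textsc{Invariance} with \thref{fact:galois-type-invariance-under-extension}.

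The step I expect to be delicate is the restriction ``$A^\ast \indep[M]{N} N_2$ hence $A^\ast \indep[M]{N} B^\ast$ (and likewise $C^\ast$)'' that feeds \textsc{Stationarity}: it passes through the right-hand side of the independence relation and so naively wants \textsc{Monotonicity}, which is not among the listed hypotheses. If one insists on using only the stated properties, the fix is to keep the full object $N_2$ on the right throughout: apply \textsc{Stationarity} with right-hand object $N_2$ to pin down $\gtp(a^\ast, n_2^\ast; N)$ as the \emph{unique} independent amalgam of $p_A$ over $N_2$, build matching independent amalgams from the $N_1$- and $N_3$-sides (using \textsc{Existence} and \textsc{Transitivity} to enlarge the right-hand side while preserving the corner types), and only then restrict --- now at the harmless level of Galois types via \thref{prop:galois-types-basic-observations}(i) rather than at the level of $\forkindep$. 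Organising this restriction correctly is the main obstacle; everything else is assembling amalgams and invoking \thref{fact:galois-type-invariance-under-extension}.
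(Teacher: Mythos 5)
Your first pass does contain the gap you yourself flag: passing from $A^\ast \indep[M]{N} N_2$ to $A^\ast \indep[M]{N} B^\ast$ is an application of \textsc{Monotonicity}, which is not among the stated hypotheses, so \textsc{Stationarity} cannot be fed directly with $B$ on the right-hand side. Your proposed repair --- keep the whole of $N_2$ as the right-hand object, produce from the $N_1$- and $N_3$-sides configurations in which $A$ is independent from a full copy of $N_2$ over $M$ while the types over $(b,m)$ resp.\ $(c,m)$ are preserved, apply \textsc{Stationarity} with $N_2$ on the right, and only then restrict at the harmless level of Galois types --- is essentially the paper's proof. The paper amalgamates $N_1$ with $N_2$ over $(B,M)$ and $N_3$ with $N_2$ over $(C,M)$, uses \thref{prop:basic-properties-independence-relation}(iii) to move $N_2$ to an $M$-independent copy $n_2'$ inside each amalgam while fixing its type over $(b,m)$ resp.\ $(c,m)$, and applies \textsc{Stationarity} once to identify the two sides directly; your variant inserts a third, ``canonical'' configuration over $N_2$ built by \textsc{Existence} and applies \textsc{Stationarity} twice before amalgamating over the model $N$, which costs nothing. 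One residual remark: the enlargement step you invoke (``\textsc{Existence} and \textsc{Transitivity} to enlarge the right-hand side'') is \thref{prop:basic-properties-independence-relation}(iii), whose stated hypotheses include \textsc{Monotonicity}; the paper's own proof uses the same proposition, so this is a (harmless) imprecision in the proposition's hypothesis list rather than a defect of your argument --- in every intended application the relation satisfies \textsc{Monotonicity} by definition.
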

\begin{proof}
Using the equalities $\gtp(b, m; N_1) = \gtp(b, m; N_2)$ and $\gtp(c, m; N_2) = \gtp(c, m; N_3)$ we find $U_1$ and $U_2$ as in the diagram below, which then commutes.
\[
\begin{tikzcd}
                                           & N_1 \arrow[rr]           &  & U_1                     &                           \\
A \arrow[rrr] \arrow[ru]                   &                          &  & N_3 \arrow[r]           & U_2                       \\
                                           & B \arrow[uu] \arrow[rrr] &  &                         & N_2 \arrow[u] \arrow[luu] \\
M \arrow[rrruu] \arrow[rrrru] \arrow[ruuu] &                          &  & C \arrow[ru] \arrow[uu] &                          
\end{tikzcd}
\]
We label the arrows with domain $A$ as $a_1: A \to U_1$ and $a_2: A \to U_2$.

By \thref{prop:basic-properties-independence-relation}(iii) we find an extension $U_1 \to U_1'$ and an arrow $n_2': N_2 \to U_1'$ such that $\gtp(n_2', b, m; U_1') = \gtp(n_2, b, m; U_1)$ and $A_1 \indep[M]{U_1'} N_2'$. This means that $B \to N_1 \to U_1'$ is the same arrow as $B \to N_2 \xrightarrow{n_2'} U_1'$. Similarly we find $n_2': N_2 \to U_2'$.

Since $\gtp(n_2', m; U_1') = \gtp(n_2', m; U_2')$ we can apply \textsc{Stationarity} to obtain $\gtp(a_1, n_2', m; U_1') = \gtp(a_2, n_2', m; U_2')$. This yields the dashed arrows in the diagram below. We left out the original arrows $N_2 \to U_1$ and $N_2 \to U_2$ because they are no longer relevant.
\[
\begin{tikzcd}
                                           & N_1 \arrow[r]            & U_1 \arrow[r] & U_1' \arrow[rr, dashed] &                            & U                      \\
A \arrow[rrr] \arrow[ru]                   &                          &               & N_3 \arrow[r]           & U_2 \arrow[r]              & U_2' \arrow[u, dashed] \\
                                           & B \arrow[uu] \arrow[rrr] &               &                         & N_2 \arrow[luu] \arrow[ru] &                        \\
M \arrow[rrruu] \arrow[rrrru] \arrow[ruuu] &                          &               & C \arrow[ru] \arrow[uu] &                            &                       
\end{tikzcd}
\]
This diagram commutes. So there is only one arrow $A \to U$, and by construction we have $A \indep[M]{U} N_2$. So $U$ is the required common extension of $N_1$, $N_2$ and $N_3$.
\end{proof}
\section{The Kim-Pillay theorem for AECats}
\label{sec:kim-pillay}
This section consists of the proof of our main theorem, a version of the Kim-Pillay theorem for AECats. The original first-order version of the theorem can be found as \cite[Theorem 4.2]{kim_simple_1997}. A more modern version appears as \cite[Theorem 7.3.13]{tent_course_2012}.

\thref{cor:unique-independence-relation} follows immediately from the main theorem. \thref{cor:stable-independence-relation-unique} follows directly from the main theorem and \thref{cor:stable-independence-relation-is-simple}. 

\begin{repeated-theorem}[\thref{thm:kim-pillay-category-theoretic}]
Let $(\C, \M)$ be an AECat with the amalgamation property, and suppose that $\forkindep$ is a simple independence relation. Let $A, B, C$ be subobjects of a model $M$. Then $A \indep[C]{M} B$ if and only if $\gtp(A, B, C; M)$ does not isi-divide over $C$.
\end{repeated-theorem}
\begin{proof}
Let $\forkindep$ and be a simple independence relation, and let $\Upsilon$ be a local character function. We will show that $A \indep[C]{D} B$ if and only if $\gtp(A, B, C; D)$ does not isi-divide over $C$. Here $D$ is some model.

\textbf{Isi-nondividing implies independence.} Let $a$, $b$ and $c$ be representatives of $A$, $B$ and $C$ respectively, such that $\gtp(a, b, c; D)$ does not isi-divide over $c$. By \thref{lem:build-independent-isi-sequence} we can construct a $\forkindep[C]$-independent isi-sequence $(b_i)_{i < \lambda}$ over $c$ in some $M$, for arbitrarily large $\lambda > \Upsilon(A)$, with witnesses of independence $(M_i)_{i < \lambda}$, such that
\[
\gtp(b_i, c; M) = \gtp(b, c; D)
\]
for all $i < \lambda$. If we pick the right $\lambda$ then by definition of isi-dividing there is a set $I \subseteq \lambda$ such that $|I| \geq \Upsilon(A)$ and $\gtp(a, b, c; D)$ is consistent for $(b_i)_{i \in I}$. So we may assume there is $a': A \to M$ such that for all $i \in I$:
\[
\gtp(a', b_i, c; M) = \gtp(a, b, c; D)
\]
By possibly deleting a tail segment from $I$ we may assume that $I$ has the order type of a cardinal $\geq \Upsilon(A)$. Let $M_I = \colim_{i \in I} M_i$ and consider $M_I$ as a subobject of $M$. Denote by $A'$ the subobject represented by $a'$, and use \thref{prop:local-character-sequence} to find $i_0 \in I$ such that $A' \indep[M_{i_0}]{M} M_I$.

We denote by $B_{i_0}$ the subobject of $M$ represented by $b_{i_0}$. By \textsc{Monotonicity} $A' \indep[M_{i_0}]{M} B_{i_0}$, because $B_{i_0} \leq M_I$, and hence
\[
B_{i_0} \indep[M_{i_0}]{M} A'
\]
by \textsc{Symmetry}. Furthermore, the fact that the $(M_i)_{i < \lambda}$ are witnesses of independence gives us
\[
B_{i_0} \indep[C]{M} M_{i_0}.
\]
Then by \textsc{Transitivity}, we find $B_{i_0} \indep[C]{M} A'$ and thus $A' \indep[C]{M} B_{i_0}$ by \textsc{Symmetry}. Since $a'$ is a realisation of $\gtp(a, b, c; D)$ for $(b_i)_{i \in I}$, we have $\gtp(A', B_{i_0}, C; M) = \gtp(A, B, C; D)$. So by \textsc{Invariance} we find $A \indep[C]{D} B$, as required.

\textbf{Independence implies isi-nondividing.} We now suppose that $A \indep[C]{D} B$. Fix representatives $a$, $b$ and $c$ of $A$, $B$ and $C$ respectively. Let $\lambda > \Upsilon(B)$ and $(b_i)_{i < \lambda}$ be an isi-sequence over $c$ in some $M$ with chain of initial segments $(M_i)_{i < \lambda}$ such that $\gtp(b_i, c; M) = \gtp(b, c; D)$ for all $i < \lambda$.

Let $\Upsilon(B) \leq \kappa < \lambda$. We have $M_\kappa = \colim_{i < \kappa} M_i$, so we can apply \thref{prop:local-character-sequence} to $M_\kappa$ and $B_\kappa$ considered as subobjects of $M$ to find $i_0 < \kappa$ such that $B_\kappa \indep[M_{i_0}]{M} M_\kappa$. We will show that $\gtp(a, b, c; D)$ is consistent for $(b_i)_{i_0 \leq i < \kappa}$.

\vspace{\baselineskip}\noindent
\emph{Claim:} for all $i_0 \leq i < \kappa$ we have $B_i \indep[M_{i_0}]{M} M_i$, and thus $B_i \indep[M_{i_0}]{M_{i+1}} M_i$.

\vspace{\baselineskip}\noindent
\emph{Proof of claim:} Since we have $B_\kappa \indep[M_{i_0}]{M} M_\kappa$, it follows from \textsc{Monotonicity} that $B_\kappa \indep[M_{i_0}]{M} M_i$. Then because $(b_i)_{i < \lambda}$ is an isi-sequence we have $\gtp(b_\kappa, m_i, c; M) = \gtp(b_i, m_i, c; M)$, so in particular we have $\gtp(B_\kappa, M_i; M) = \gtp(B_i, M_i; M)$. As $M_{i_0} \leq M_i$ we actually have that $\gtp(B_\kappa, M_i, M_{i_0}; M) = \gtp(B_i, M_i, M_{i_0}; M)$. The claim then follows from \textsc{Invariance}.

\vspace{\baselineskip}\noindent
We will now use the sequence $(b_i)_{i_0 \leq i < \kappa}$ to build a chain of models $(N_i)_{i_0 \leq i < \kappa}$, with monomorphisms $a': A \to N_{i_0}$, $b: B \to N_{i_0}$ and $c: C \to N_{i_0}$ such that $\gtp(a', b, c; N_{i_0}) = \gtp(a, b, c; D)$. So this is really saying that $A$, $B$ and $C$ are embedded in the chain $(N_i)_{i_0 \leq i < \kappa}$. The reason that we use the same notation for $b$ and $c$ as monomorphisms into $N_{i_0}$ and $D$, while we make a distinction between $a$ and $a'$, is because $N_{i_0}$ will be an extension of $D$ and $b$ and $c$ will just be the composition with this extension. On the other hand, $a'$ will not be the composition of $a: A \to D$ and the extension $D \to N_{i_0}$.

We construct this chain by transfinite induction, and such that at stage $i$:
\begin{enumerate}[label=(\roman*)]
\item there is an extension $m_i': M_i \to N_i$, and this is natural in the sense that
\[
\begin{tikzcd}
N_j \arrow[r] & N_i \\
M_j \arrow[r] \arrow[u, "m_j'"] & M_i \arrow[u, "m_i'"']
\end{tikzcd}
\]
commutes for all $i_0 \leq j < i$;
\item if $i$ is a successor (and is not $i_0$), say $i = j+1$, then $\gtp(a', b_j', c; N_i) = \gtp(a, b, c; D)$, where $b_j'$ is the composition $B \xrightarrow{b_j} M_i \xrightarrow{m_i'} N_i$;
\item $A' \indep[M_{i_0}]{N_i} M_i'$.
\end{enumerate}

\vspace{\baselineskip}\noindent
\underline{\emph{Base case, $i = i_0$.}} To build $N_{i_0}$ we first use $\gtp(b_{i_0}, c; M) = \gtp(b, c; D)$ to find extensions $M \to  N \leftarrow D$ witnessing this. Then we apply \thref{prop:basic-properties-independence-relation}(iii) to find an extension $N \to N_{i_0}$ and an arrow $a': A \to N_{i_0}$ such that $A' \indep[C]{N_{i_0}} N$ and $\gtp(a', b, c; N_{i_0}) = \gtp(a, b, c; N)$. We take $m_{i_0}'$ to be the arrow $M_{i_0} \to M \to N_{i_0}$, so $m_{i_0}' = m_{i_0}$. The properties in the induction hypothesis are trivial.

There are two more important properties of $N_{i_0}$ that will be used in the successor step.  The first one is that $A' \indep[M_{i_0}]{N_{i_0}} B$, by \textsc{Base-Monotonicity} and \textsc{Monotonicity}. The second one is:
\begin{align}
\label{eq:gtp-b-mi0}
\gtp(b, m_{i_0}; N_{i_0}) = \gtp(b_{i_0}, m_{i_0}; M) = \gtp(b_i, m_{i_0}; M) = \gtp(b_i, m_{i_0}; M_{i+1}).
\end{align}
The first equality follows because by construction $b_{i_0}: B \to M$ composed with the extension $M \to N_{i_0}$ and $b: B \to D$ composed with the extension $D \to N_{i_0}$ are the same arrow. The second equality follows because $(b_i)_{i < \lambda}$ is an isi-sequence. The last equality follows because the $(M_i)_{i < \lambda}$ form a chain of initial segments.

\vspace{\baselineskip}\noindent
\underline{\emph{Successor step.}} Suppose we have constructed $N_i$. By the claim earlier we have $B_i \indep[M_{i_0}]{M_{i+1}} M_i$, by construction we have $A' \indep[M_{i_0}]{N_{i_0}} B$ and finally we have $A' \indep[M_{i_0}]{N_i} M_i'$ from the induction hypothesis. We wish to apply \textsc{3-amalgamation} to this. For that we need to check that the following Galois types are equal:
\begin{itemize}
\item $\gtp(a', m_{i_0}; N_{i_0}) = \gtp(a', m_{i_0}; N_i)$, this holds because $N_i$ is just an extension of $N_{i_0}$;
\item $\gtp(b_i, m_{i_0}, M_{i+1}) = \gtp(b, m_{i_0}; N_{i_0})$, this is just the equality in (\ref{eq:gtp-b-mi0});
\item $\gtp(m_i, m_{i_0}; M_{i+1}) = \gtp(m_i', m_{i_0}; N_i)$, follows from the fact that $M_i$ is a model, so $\gtp(m_i; M_{i+1}) = \gtp(m_i; M_i) = \gtp(m_i'; N_i)$, and the fact that $m_{i_0}$ factors through $m_i$ and $m_i'$. Here property (i) of the induction hypothesis is important to guarantee that $M_{i_0} \to N_{i_0} \to N_i$ is really the same arrow as $M_{i_0} \to M_i \to N_i$.
\end{itemize}
So we can indeed apply \textsc{3-amalgamation} to find extensions from $M_{i+1}$, $N_{i_0}$ and $N_i$ to $N_{i+1}$:
\[
\begin{tikzcd}
                                                    & N_i \arrow[rrr, dashed]             &  &                                        & N_{i+1}                                 \\
A \arrow[ru, "a'"] \arrow[rrr, "a'", bend left]     &                                     &  & N_{i_0} \arrow[ru, dashed]             &                                         \\
                                                    & M_i \arrow[uu, "m_i'"'] \arrow[rrr] &  &                                        & M_{i+1} \arrow[uu, "m_{i+1}'"', dashed] \\
                                                    &                                     &  &                                        &                                         \\
M_{i_0} \arrow[ruuuu] \arrow[rrruuu] \arrow[rrrruu] &                                     &  & B \arrow[ruu, "b_i"'] \arrow[uuu, "b"] &                                        
\end{tikzcd}
\]
We have to check the three properties of the induction hypothesis.
\begin{enumerate}[label=(\roman*)]
\item As a result of \textsc{3-amalgamation}, the square
\[
\begin{tikzcd}
N_i \arrow[r] & N_{i+1} \\
M_i \arrow[r] \arrow[u, "m_i'"] & M_{i+1} \arrow[u, "m_{i+1}'"']
\end{tikzcd}
\]
commutes. Because $M_j \to N_{i+1}$ will factor through $M_i$ for all $j < i$, and the induction hypothesis is satisfied for $i$, we see that in fact the naturality condition is satisfied for all $j < i+1$.
\item We have $\gtp(a', b_i', c; N_{i+1}) = \gtp(a', b, c; N_{i_0}) = \gtp(a, b, c; D)$.
\item By \textsc{3-amalgamation} we directly get $A' \indep[M_{i_0}]{N_{i+1}} M_{i+1}'$.
\end{enumerate}

\vspace{\baselineskip}\noindent
\underline{\emph{Limit step.}} We set $N_i = \colim_{i_0 \leq j < i} N_j$. By (i) from the induction hypothesis the arrows $m_j'$ composed with the coprojections $N_j \to N_i$ form a cocone on $(M_j)_{i_0 \leq j < i}$. By the universal property of the colimit $M_i = \colim_{i_0 \leq j < i} M_j$ we find the required extension $m_i': M_i \to N_i$. This shows that (i) is satisfied. Property (ii) is vacuous. And for (iii) we use the induction hypothesis to see that $A' \indep[M_{i_0}]{N_i} M_j'$ for all $i_0 \leq j < i$, and so we can apply \textsc{Union} to find $A' \indep[M_{i_0}]{N_i} M_i'$. This finishes the inductive construction of $(N_i)_{i_0 \leq i < \kappa}$.

\vspace{\baselineskip}\noindent
Now that we have constructed $(N_i)_{i_0 \leq i < \kappa}$ we can set $N_\kappa = \colim_{i_0 \leq i<\kappa} N_i$. Then for each $i_0 \leq i < \kappa$ we have by (ii) from the induction hypothesis that
\[
\gtp(a', b_i', c; N_\kappa) =
\gtp(a', b_i', c; N_{i+1}) =
\gtp(a, b, c; D).
\]
It follows from property (i) of the induction hypothesis that $N_\kappa$ is a chain bound for $(M_i)_{i_0 \leq i < \kappa}$. So by \thref{lem:chain-consistency} $\gtp(a, b, c; D)$ is consistent for $(b_i)_{i_0 \leq i < \kappa}$. As $(b_i)_{i_0 \leq i < \kappa}$ is a subsequence of size $\kappa$ of an arbitrarily long isi-sequence $(b_i)_{i < \lambda}$ over $c$, and $\kappa$ was arbitrarily large below $\lambda$, we conclude that $\gtp(a, b, c; D)$ does not isi-divide.
\end{proof}

\bibliographystyle{alpha}
\bibliography{bibfile}

\begin{thebibliography}{BYBHU08}

\bibitem[AR94]{adamek_locally_1994}
Ji{\v r}{\'i} Adamek and Ji{\v r}{\'i} Rosick{\'y}.
\newblock {\em Locally {Presentable} and {Accessible} {Categories}}.
\newblock Cambridge University Press, March 1994.

\bibitem[BL03]{buechler_simple_2003}
Steven Buechler and Olivier Lessmann.
\newblock Simple homogeneous models.
\newblock {\em Journal of the American Mathematical Society}, 16(1):91--121,
  2003.

\bibitem[Bon14]{boney_tameness_2014}
Will Boney.
\newblock Tameness from {Large} {Cardinal} {Axioms}.
\newblock {\em The Journal of Symbolic Logic}, 79(4):1092--1119, December 2014.

\bibitem[BR12]{beke_abstract_2012}
Tibor Beke and Ji{\v r}{\'i} Rosick{\'y}.
\newblock Abstract elementary classes and accessible categories.
\newblock {\em Annals of Pure and Applied Logic}, 163(12):2008--2017, December
  2012.

\bibitem[BY03a]{ben-yaacov_positive_2003}
Itay Ben-Yaacov.
\newblock Positive model theory and compact abstract theories.
\newblock {\em Journal of Mathematical Logic}, 03(01):85--118, May 2003.

\bibitem[BY03b]{ben-yaacov_simplicity_2003}
Itay Ben-Yaacov.
\newblock Simplicity in compact abstract theories.
\newblock {\em Journal of Mathematical Logic}, 03(02):163--191, November 2003.

\bibitem[BYBHU08]{ben-yaacov_model_2008}
Itay Ben-Yaacov, Alexander Berenstein, C.~Ward Henson, and Alexander Usvyatsov.
\newblock Model theory for metric structures.
\newblock In Zo{\'e} Chatzidakis, Dugald Macpherson, Anand Pillay, and Alex
  Wilkie, editors, {\em Model {Theory} with {Applications} to {Algebra} and
  {Analysis}}, volume~2. Cambridge University Press, Cambridge, 2008.

\bibitem[GMA20]{grossberg_simple-like_2020}
Rami Grossberg and Marcos Mazari-Armida.
\newblock Simple-like independence relations in abstract elementary classes.
\newblock {\em arXiv:2003.02705 [math]}, March 2020.
\newblock arXiv: 2003.02705.

\bibitem[Gro02]{grossberg_classification_2002}
Rami Grossberg.
\newblock Classification theory for abstract elementary classes.
\newblock In Yi~Zhang, editor, {\em Contemporary {Mathematics}}, volume 302,
  pages 165--204. American Mathematical Society, Providence, Rhode Island,
  2002.

\bibitem[GV06]{grossberg_categoricity_2006}
Rami Grossberg and Monica Vandieren.
\newblock Categoricity from one successor cardinal in tame abstract elementary
  classes.
\newblock {\em Journal of Mathematical Logic}, 06(02):181--201, December 2006.

\bibitem[HH09]{hirvonen_categoricity_2009}
Åsa Hirvonen and Tapani Hyttinen.
\newblock Categoricity in homogeneous complete metric spaces.
\newblock {\em Archive for Mathematical Logic}, 48(3):269--322, May 2009.

\bibitem[HK06]{hyttinen_independence_2006}
T.~Hyttinen and M.~Kes{\"a}l{\"a}.
\newblock Independence in finitary abstract elementary classes.
\newblock {\em Annals of Pure and Applied Logic}, 143(1):103--138, November
  2006.

\bibitem[Kir08]{kirby_abstract_2008}
Jonathan Kirby.
\newblock Abstract {Elementary} {Categories}, August 2008.
\newblock Unpublished.

\bibitem[Kir10]{kirby_quasiminimal_2010}
Jonathan Kirby.
\newblock On quasiminimal excellent classes.
\newblock {\em Journal of Symbolic Logic}, 75:551--564, 2010.

\bibitem[KP97]{kim_simple_1997}
Byunghan Kim and Anand Pillay.
\newblock Simple theories.
\newblock {\em Annals of Pure and Applied Logic}, 88(2):149--164, November
  1997.

\bibitem[LR14]{lieberman_classification_2014}
Michael Lieberman and Jirí Rosický.
\newblock Classification theory for accessible categories.
\newblock {\em arXiv:1404.2528 [math]}, November 2014.
\newblock arXiv: 1404.2528.

\bibitem[LRV19]{lieberman_forking_2019}
Michael Lieberman, Ji{\v r}{\'i} Rosick{\'y}, and Sebastien Vasey.
\newblock Forking independence from the categorical point of view.
\newblock {\em Advances in Mathematics}, 346:719--772, April 2019.

\bibitem[LRV20]{lieberman_cellular_2020}
Michael Lieberman, Jiří Rosický, and Sebastien Vasey.
\newblock Cellular categories and stable independence.
\newblock {\em arXiv:1904.05691 [math]}, March 2020.
\newblock arXiv: 1904.05691.

\bibitem[Pil00]{pillay_forking_2000}
Anand Pillay.
\newblock Forking in the category of existentially closed structures.
\newblock 2000.

\bibitem[PY18]{poizat_positive_2018}
Bruno Poizat and Aibat Yeshkeyev.
\newblock Positive {Jonsson} {Theories}.
\newblock {\em Logica Universalis}, 12(1):101--127, May 2018.

\bibitem[She87]{shelah_universal_1987}
Saharon Shelah.
\newblock Universal classes.
\newblock In John~T. Baldwin, editor, {\em Classification {Theory}}, Lecture
  {Notes} in {Mathematics}, pages 264--418. Springer Berlin Heidelberg, 1987.
\newblock Shelah 300.

\bibitem[She09]{shelah_classification_2009}
Saharon Shelah.
\newblock {\em Classification {Theory} for {Abstract} {Elementary} {Classes}}.
\newblock College Publications, 2009.

\bibitem[TZ12]{tent_course_2012}
Katrin Tent and Martin Ziegler.
\newblock {\em A {Course} in {Model} {Theory}}.
\newblock Cambridge University Press, March 2012.

\bibitem[Vas17]{vasey_shelahs_2017}
Sebastien Vasey.
\newblock Shelah's eventual categoricity conjecture in universal classes:
  {Part} {I}.
\newblock {\em Annals of Pure and Applied Logic}, 168(9):1609--1642, September
  2017.

\end{thebibliography}


\end{document}